\title[An effective Lie--Kolchin Theorem]{An effective Lie--Kolchin Theorem for quasi-unipotent matrices}
\author{Thomas Koberda}
\address{Department of Mathematics, University of Virginia,
Charlottesville, VA 22904-4137, USA}
\email{thomas.koberda@gmail.com}
\urladdr{\href{http://faculty.virginia.edu/Koberda/}%
{http://faculty.virginia.edu/Koberda/}}
\author{Feng Luo}
\address{Department of Mathematics,
Rutgers University
Hill Center-Busch Campus
110 Frelinghuysen Road
Piscataway, NJ 08854,
USA}
\email{fluo@math.rutgers.edu}
\urladdr{\href{http://sites.math.rutgers.edu/~fluo/}%
{http://sites.math.rutgers.edu/~fluo/}}
\author{Hongbin Sun}
\address{Department of Mathematics, Rutgers University, Hill Center-Busch Campus, 110 Frelinghuysen Road, Piscataway, NJ 08854, USA}
\email{hongbin.sun@rutgers.edu}
\urladdr{\href{http://sites.math.rutgers.edu/~hs735/}%
{http://sites.math.rutgers.edu/~hs735/}}
\let\@@enum@org\@@enum@
\def\@@enum@[#1]{\@@enum@org[\normalfont #1]}
\newtheorem{thm}{Theorem}[section]
\newtheorem{lem}[thm]{Lemma}
\newtheorem{cor}[thm]{Corollary}
\newtheorem{prop}[thm]{Proposition}
\theoremstyle{definition}
\newtheorem{defn}[thm]{Definition}
\newcommand\Z{\mathbb{Z}}
\newcommand\C{\mathbb{C}}
\newcommand\tr{\operatorname{tr}}
\newcommand\GL{\operatorname{GL}}
\newcommand\ind{\operatorname{ind}}
\newcommand\Mod{\operatorname{Mod}}
\newcommand\diag{\operatorname{diag}}
\begin{document}

\date{\today}

\subjclass{primary 20H20, 20F38, secondary 20F16, 15A15}

\keywords{Lie-Kolchin theorem, unipotent matrices, solvable groups, mapping class groups }

\begin{abstract}
We establish an effective version of the classical Lie--Kolchin Theorem. Namely, let $A,B\in\GL_m(\C)$ be quasi--unipotent matrices such that the Jordan Canonical Form of $B$ consists of a single block, and suppose that for all $k\geq0$ the matrix $AB^k$ is also quasi--unipotent.
Then $A$ and $B$ have a common eigenvector. In particular, $\langle A,B\rangle<\GL_m(\C)$ is a solvable subgroup. We give
applications of this result to the representation theory of mapping class groups of orientable surfaces.
\end{abstract}
\maketitle

\section{Introduction}
Let $V$ be a finite dimensional vector space over an algebraically closed field.
In this paper, we study the structure of certain subgroups of $\GL(V)$ which contain ``sufficiently many"
elements of a relatively simple form. We are motivated by the representation theory of the mapping class group of a surface of hyperbolic
type.

If $S$ be an orientable surface of genus $2$ or more, the \emph{mapping class group}
 $\Mod(S)$ is the group of homotopy classes
of orientation preserving homeomorphisms of $S$. The group $\Mod(S)$ is generated by certain mapping classes known as \emph{Dehn twists}, which are defined for essential simple closed curves of $S$. Here, an \emph{essential simple closed curve} is a
free homotopy class of embedded copies
of $S^1$ in $S$ which is homotopically essential, in that the homotopy class represents a nontrivial conjugacy class in $\pi_1(S)$
which is not the homotopy class of a boundary component or
a puncture of $S$.

If the genus of $S$ is
$3$ or more, it is unknown whether or not the mapping class group $\Mod(S)$ admits a faithful finite dimensional representation.
One of the main themes of the present paper is to prove
that if a representation $\rho\colon\Mod(S)\to\GL(V)$ maps a Dehn twist along a nonseparating simple closed curve to a matrix in the relatively simple form, the representation $\rho$ cannot be faithful. In the course of pursuing this thread, we prove a general result
about $2$--generated subgroups of
$\GL(V)$ which contain many quasi--unipotent elements.

\subsection{Main results}

The starting point of this paper is the following fact about images of Dehn
twists under finite dimensional linear representations of mapping class groups. This fact seems to be well-known, and appears in many
different contexts by several authors and with a number of distinct proofs. The reader may consult
Corollary 3.5 of~\cite{button2018} (cf.~\cite{button}), as well as Proposition 2.4 of~\cite{AramayonaSouto}.
We adopt the assumption here  and throughout that all vector spaces are either over the field of complex numbers
$\mathbb C$ or over an algebraically closed field of characteristic $p>0$.

\begin{prop}\label{thm:dt-intro}
Let $S$ be a surface of genus $3$ or more, let $T\in\Mod(S)$ be a Dehn twist about an essential simple closed curve on $S$, and let
$\rho\colon\Mod(S)\to\GL(V)$ be a finite dimensional linear representation of $\Mod(S)$. Then $\rho(T)$ is quasi-unipotent.
\end{prop}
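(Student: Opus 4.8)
The plan is to reduce Proposition~\ref{thm:dt-intro} to a purely linear-algebraic statement about a single matrix and then feed in the lantern relation from surface topology.

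\emph{Reduction.} By the change of coordinates principle, any two Dehn twists along nonseparating simple closed curves of $S$ are conjugate in $\Mod(S)$; hence their images under $\rho$ are conjugate in $\GL(V)$ and share a single characteristic polynomial $\chi(t)=\prod_{i=1}^m(t-\lambda_i)$. Let $E=\{\lambda_1,\dots,\lambda_m\}$ denote this common multiset of eigenvalues (all nonzero, since $\rho(T)\in\GL(V)$). It then suffices to show that every $\lambda_i$ is a root of unity, equivalently that some power of $\rho(T)$ is unipotent. For a separating curve the same strategy applies with the chain relations in place of the lantern relation below; since the case relevant to the applications in this paper is the nonseparating one, I describe that.

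\emph{Topological input.} Because $S$ has genus at least $3$, one can embed a four-holed sphere $\Sigma\subset S$ whose four boundary curves $\partial_1,\dots,\partial_4$ and three essential interior curves $x,y,z$ are \emph{all} nonseparating in $S$ --- this is exactly the configuration that is unavailable in genus $2$ and that forces $H_1(\Mod(S);\Z)=0$. The lantern relation gives $T_{\partial_1}T_{\partial_2}T_{\partial_3}T_{\partial_4}=T_xT_yT_z$ in $\Mod(S)$. Each $\partial_i$ is disjoint from the other $\partial_j$ and from $x,y,z$, so, writing $A_i=\rho(T_{\partial_i})$ and $B_1=\rho(T_x),\ B_2=\rho(T_y),\ B_3=\rho(T_z)$: the $A_i$ pairwise commute, each $A_i$ commutes with each $B_j$, and all seven of these matrices have eigenvalue multiset $E$.

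\emph{Linear algebra.} Taking determinants in the lantern relation gives $(\lambda_1\cdots\lambda_m)^4=(\lambda_1\cdots\lambda_m)^3$, hence $\det\rho(T)=\lambda_1\cdots\lambda_m=1$; that part is immediate. To upgrade this to quasi-unipotence, I would simultaneously upper-triangularize the commuting family $A_1,\dots,A_4$ (possible over our algebraically closed field): the diagonal of $C:=A_1A_2A_3A_4$ then consists of $m$ products, each formed by choosing one eigenvalue (an element of $E$) from each $A_i$. On the other side, $C=B_1B_2B_3$ is central in $\langle B_1,B_2,B_3\rangle$, since $C$ commutes with each $B_j$; this constrains the eigenvalues of $C$ in terms of $B_1,B_2,B_3$ as well, and each of these again has eigenvalue multiset $E$. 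Comparing the two resulting descriptions of the multiset of eigenvalues of $C$ --- and, if necessary, running the comparison over sufficiently many such lantern configurations in $S$ --- should force the multiplicative group generated by $E$ in $\C^\times$ to be finite, so that every $\lambda_i$ is a root of unity. I expect this last step to be the real obstacle: a single lantern yields only rather weak multiplicative identities (they give no more than $\det\rho(T)=1$ on their own), and extracting the conclusion that $E$ consists of roots of unity is where the genus $\geq 3$ hypothesis must be used decisively; the arguments of~\cite{button2018} and~\cite{AramayonaSouto} carry out exactly such a bookkeeping, and I would follow one of them for the details.
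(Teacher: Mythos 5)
Your proposal stops exactly where the proof has to start: everything you actually establish is $\det\rho(T)=1$, and that is far from quasi-unipotence (think of $\diag(2,1/2)$). The step you defer --- comparing the two descriptions of the spectrum of $C=A_1A_2A_3A_4=B_1B_2B_3$ --- does not go through as sketched: the $B_j$ do not commute with one another, so the eigenvalues of $B_1B_2B_3$ are not expressible in terms of the common eigenvalue multiset $E$, and simultaneous triangularization is unavailable on that side. A single lantern, used globally, really does give nothing beyond $\det\rho(T)=1$, as you note yourself, and running over more lantern configurations does not help, because the argument never localizes the relation at $T$.

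The missing idea, which is how the paper proceeds, is to refine the topological input so that the relation can be restricted to each generalized eigenspace of $\rho(T)$. Cut $S$ along the twisting curve $\gamma$; since the genus is at least $3$, the cut surface $X$ has a piece of genus at least two on one side of $\gamma$, and one can embed the four-holed sphere in $X$ so that $\gamma$ is one of its boundary components and the other six lantern curves are nonseparating \emph{in $X$}. Then each interior curve $\beta_i$ is carried to the corresponding boundary curve $\gamma_i$ by a homeomorphism $\phi_i$ supported in $X$, hence commuting with $T$, and the lantern relation rearranges to $T=[\phi_1,B_1][\phi_2,B_2][\phi_3,B_3]$ with every factor commuting with $T$ (Lemma~\ref{lem:dehn-twist}). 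Now each $\rho(\phi_i)$ and $\rho(B_i)$ preserves every generalized eigenspace $W$ of $\rho(T)$, so $\rho(T)|_W$ is a product of commutators in $\GL(W)$ and has determinant $1$; since all eigenvalues of $\rho(T)|_W$ equal the single $\lambda$ attached to $W$, this forces $\lambda^{\dim W}=1$, i.e.\ $\lambda$ is a root of unity (Lemma~\ref{lem:central-comm}). This localization is precisely what your global determinant identity cannot see, and it also treats separating curves uniformly, with no appeal to chain relations.
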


Here, an element $A\in\GL(V)$ is
\emph{quasi-unipotent} if there exists a $k>0$ and $n>0$ such that the minimal polynomial of $A^k$ is $(x-1)^n$, i.e., all eigenvalues of $A$ are roots of unity. The reader will find an extensive discussion of ideas closely related to Proposition~\ref{thm:dt-intro} in~\cite{Bridson}.

The fact that Dehn twists about nonseparating curves are conjugate to each other as group elements in the mapping class group
implies that there are many elements of $\Mod(S)$ which
are sent to quasi-unipotent elements under any linear representation of $\Mod(S)$. Moreover, Dehn twists satisfy many relations amongst
each other, thus imposing further constraints on the structure of linear representations of $\Mod(S)$.

For example, if $S$ is closed with one
marked point (or equivalently from an algebraic standpoint, has a single puncture), then the Birman
Exact Sequence \cite{Farb} implies that $\Mod(S)$ contains a natural
copy of a closed surface group $\pi_1(S)$
which is generated by products of commuting Dehn twists. That is, homotopy classes of simple loops in $\pi_1(S)$ are given by products
of two twists about disjoint curves, so that under a finite dimensional linear representation $\rho$
of $\Mod(S)$, any such element of $\pi_1(S)$ is sent to a quasi-unipotent matrix. Thus, if $a,b\in\pi_1(S)$ are homotopy classes of simple
loops with geometric intersection number exactly one, then the elements $\{ab^n, a^nb, b^na, ba^n\}$ are homotopy classes of simple loops
on $S$ for all $n\in\Z$.

Motivated by this discussion of mapping class groups, we have the following
general result about linear groups which is the main result of this paper.

\begin{thm}\label{thm:main}
Let $A,B\in\GL(V)$ be quasi-unipotent matrices such that the Jordan Canonical Form of $B$ consists of a single block. Suppose that the value of
$\tr((AB^n)^k)$ is independent of $n$ for all $k$. Then the matrices $A$ and $B$ have a common eigenvector in $V$.
\end{thm}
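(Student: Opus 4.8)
The plan is to exploit the hypothesis that $\tr((AB^n)^k)$ is independent of $n$ for all $k\geq 1$, which is equivalent to saying that the characteristic polynomial of $AB^n$ — hence its multiset of eigenvalues — is independent of $n$. Since $B$ has a single Jordan block, after conjugating we may assume $B$ is (a scalar root of unity times) a single unipotent Jordan block $J$; the scalar can be absorbed after passing to a power, so the truly useful structural fact is that $B = \zeta(I + N)$ where $N$ is the nilpotent shift and $\zeta$ a root of unity. The one-dimensional space $\ker(B - \zeta I)$ is then the \emph{unique} eigenvector direction of $B$, so the whole problem reduces to showing that this specific line is $A$-invariant. I would phrase everything in terms of the flag $0 = W_0 \subset W_1 \subset \cdots \subset W_m = V$ with $W_j = \ker N^j$, the unique complete flag preserved by $B$; the target is to show $A$ preserves $W_1$.

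**The main computation.**

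First I would write $B^n = \zeta^n(I+N)^n$ and expand $(I+N)^n = \sum_{i\geq 0}\binom{n}{i}N^i$, so that $AB^n = \zeta^n\sum_i \binom{n}{i} AN^i$; the entries of this matrix are polynomials in $n$ of bounded degree times $\zeta^n$. The coefficients of the characteristic polynomial $\det(xI - AB^n)$ are therefore, up to the factors $\zeta^{n}$, polynomials in $n$; the hypothesis forces each such polynomial-in-$n$ to be constant (once the root-of-unity twist is accounted for, e.g. by restricting to $n$ in a fixed residue class modulo the order of $\zeta$ and using that a polynomial agreeing with a periodic function on an arithmetic progression is constant). This rigidity is the engine: it says the "top-degree in $n$" contributions to all the elementary symmetric functions of the eigenvalues must vanish. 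I expect the cleanest route is to look at $\tr(AB^n)$ and more generally $\tr((AB^n)^k)$ directly, expand in powers of $n$, and extract from the vanishing of the leading coefficients that $AN^{m-1}$ (the highest nilpotent power, which has rank one with image $W_1$) interacts with $A$ in a constrained way — ultimately that the rank-one operator $N^{m-1}$, whose image is $W_1$ and whose kernel is $W_{m-1}$, satisfies a relation with $A$ forcing $A(W_1)\subseteq W_1$.

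**The anticipated obstacle and how to get around it.**

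The hard part will be organizing the combinatorics of $\tr((AB^n)^k) = \tr\big((\zeta^n(I+N)^n A)^k\big)$ cleanly enough to read off the invariance of $W_1$ without drowning in binomial coefficients: the leading behavior in $n$ of the $k$-th power trace involves sums over tuples $(i_1,\dots,i_k)$ of the products $\binom{n}{i_1}\cdots\binom{n}{i_k}\tr(N^{i_1}A\cdots N^{i_k}A)$, and one must identify which tuples contribute to the genuine top degree $n^{k(m-1)}$ and show the corresponding trace — essentially $\tr\big((N^{m-1}A)^k\big)$ up to a combinatorial constant — is forced to be zero. Since $N^{m-1}A$ has rank at most one, $\tr\big((N^{m-1}A)^k\big) = \big(\tr(N^{m-1}A)\big)^{k-1}\tr(N^{m-1}A)$-type collapse applies, and its vanishing for all $k$ gives $\tr(N^{m-1}A) = 0$, i.e. the eigenvector line $W_1 = \im N^{m-1}$ is in the kernel of the functional $v \mapsto$ (appropriate coordinate of $Av$). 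I would then induct: having killed the top coefficient, the next coefficient in $n$ sees $N^{m-2}$, and a similar rank argument pins down $A(W_1) \subseteq W_1$. An alternative, possibly slicker finish: once one knows $\det(AB^n)$ and $\tr(AB^n)$ are $n$-independent, consider the action of $A$ on the line $W_1$ and on the quotient flag, set up the claim that $A$ must stabilize each $W_j$ by a dimension count on where $AB^n$ can send $W_1$, and conclude $A$ and $B$ share the eigenvector spanning $W_1$. The genuinely delicate point throughout is converting "all trace polynomials in $n$ are constant" into "a single specific line is $A$-invariant," and I expect that to require the rank-one structure of $N^{m-1}$ as the crucial lever.
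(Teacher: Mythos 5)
Your setup matches the paper's: pass to a unipotent power of $B$, write $B=I+N$ with $N$ a single nilpotent Jordan block, expand $\tr\bigl((AB^n)^k\bigr)=\sum\binom{n}{i_1}\cdots\binom{n}{i_k}\tr\bigl(AN^{i_1}\cdots AN^{i_k}\bigr)$, and use constancy in $n$ to kill the coefficients of positive powers of $n$. But the heart of the theorem is missing. The top coefficient you isolate, at degree $k(m-1)$, only involves $\tr\bigl((AN^{m-1})^k\bigr)$, and since $N^{m-1}$ has a single nonzero entry this tells you exactly one thing: the bottom-left corner entry of $A$ vanishes (already the $k=1$ case gives this). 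That is far from $A(W_1)\subseteq W_1$, which requires the entire first column of $A$ below the diagonal to vanish. Your proposed continuation --- ``the next coefficient in $n$ sees $N^{m-2}$, and a similar rank argument pins down $A(W_1)\subseteq W_1$'' --- does not work as stated: the lower-order coefficients are sums of traces $\tr(AN^{i_1}\cdots AN^{i_k})$ over tuples with $\sum i_\ell<k(m-1)$, these mix entries of $A$ from several diagonals (not from the first column) in coupled nonlinear expressions, and the operators involved are no longer rank one, so there is no collapse. Moreover the trace data naturally constrains \emph{diagonals} of $A$, not columns, so even the target of your induction is misaligned with the information available.

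What the paper actually does to bridge this gap is twofold. First, it introduces the \emph{index} of $A$ (the lowest nonzero sub-diagonal) and shows the index is submultiplicative; this isolates, for each $k$, the coefficient of $n^{rk}$ (where $r=\operatorname{ind}(A)$) as a clean polynomial $p_k$ in only the entries $x_i=A_{i,i-r}$ of that lowest diagonal. Second --- and this is the genuinely hard step your sketch has no substitute for --- it proves that $p_k=0$ for all $k$ forces $x_1=\cdots=x_{m+1}=0$: after substituting $x_i=y_i^2$, the equations become $\tr(\mathfrak{A}^k)=0$ for the matrix $\mathfrak{A}=\bigl(y_iy_j/(i-j+r)!\bigr)$, hence $\mathfrak{A}$ is nilpotent, and the vanishing of its principal minors is leveraged against the fact (proved via the lower triangular Pascal matrix, the Cauchy--Binet formula, and total nonnegativity) that all principal minors of $\bigl(1/(i-j+r)!\bigr)$ are nonsingular; an induction then kills the $y_i$ one by one. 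Iterating on the index shows $A$ is upper triangular in the Jordan basis of $B$, which yields the common eigenvector $e_1$. Without some replacement for this combinatorial/total-positivity input, your argument establishes only the vanishing of a single entry of $A$ and does not prove the theorem.
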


\begin{cor}\label{cor:solvable}
Under the hypotheses of Theorem~\ref{thm:main}, there is a basis of $V$ for which the subgroup $\langle A,B\rangle$ is upper triangular,
and is hence solvable.
\end{cor}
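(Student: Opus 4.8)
The plan is to reduce to the case where $B$ is unipotent, to reinterpret the hypothesis as saying that the characteristic polynomial of $AB^n$ does not depend on $n$, and then to run an asymptotic analysis that forces $A$ into the Borel subgroup determined by $B$. First, since $B$ is quasi-unipotent with a single Jordan block, $B=\beta(I+N_0)$ with $\beta$ a root of unity of order $d$ (prime to the characteristic) and $N_0$ the nilpotent shift; I would replace $B$ by $B^d$, which is again a single Jordan block, now unipotent, with the same one-dimensional eigenspace, and for which $\tr((A(B^d)^n)^k)=\tr((AB^{dn})^k)$ is still independent of $n$. A common eigenvector of $A$ and $B^d$ is one for $A$ and $B$, and $B$ still preserves the relevant flag, so we may assume $B$ is unipotent; fix a basis with $Ne_i=e_{i-1}$, $Ne_1=0$ for $N=\log B$, so that $B=\exp N$ and the only $B$-invariant subspaces are the flag members $W_j:=\ker N^j=\langle e_1,\dots,e_j\rangle$.

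By Newton's identities (one must take some care in small positive characteristic, arguing with exterior powers rather than power sums), the hypothesis is equivalent to the characteristic polynomial of $AB^n$ being independent of $n$. Since $B^n=\exp(nN)$ with $N$ nilpotent, $AB^n$ is a polynomial function of $n$, so this characteristic polynomial, being constant on $\bN$, is constant; equivalently $\det(xI-A\exp(sN))=\det(xI-A)$ for every scalar $s$.

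The heart of the argument is to deduce from this that $A$ is upper triangular in the chosen basis, which simultaneously provides the common eigenvector $e_1$ and the triangularization of $\langle A,B\rangle$ in Corollary~\ref{cor:solvable}. Conjugating the family $s\mapsto A\exp(sN)$ by $D_s=\diag(1,s,\dots,s^{m-1})$ replaces $\exp(sN)$ by the fixed matrix $\exp N$ and replaces $A$ by $A_s$ with $(A_s)_{ij}=s^{i-j}a_{ij}$, so $\det(xI-A_s\exp N)=\det(xI-A)$ for all $s$. If $A$ were not upper triangular, let $\delta\ge1$ be the largest value of $i-j$ among the nonzero entries $a_{ij}$; then $s^{-\delta}A_s\to M$ as $s\to\infty$, where $M\ne0$ is the part of $A$ supported on its $\delta$-th subdiagonal, so $s^{-\delta}A_s\exp N\to M\exp N$, and hence the eigenvalues of $A_s\exp N$ rescaled by $s^{-\delta}$ converge to those of $M\exp N$. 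The crucial claim — and, I expect, the main obstacle — is that $M\exp N$ is not nilpotent whenever $M\ne0$ is supported on a subdiagonal and $N$ is regular nilpotent. Granting it, $A_s\exp N$ acquires an eigenvalue of size comparable to $|s|^{\delta}\to\infty$, contradicting that its eigenvalues are the roots of the fixed polynomial $\det(xI-A)$, which are roots of unity and in particular bounded (in positive characteristic ``bounded'' is replaced by a valuation estimate over $\bF_p((s^{-1}))$). Therefore $A$ is upper triangular.

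To prove the crucial claim I would note that $M\exp N\sim\exp N\cdot M$ and that, since $\im M\subseteq\langle e_{1+\delta},\dots,e_m\rangle$, this subspace is invariant and carries all nonzero eigenvalues; on it the operator has the form $\widetilde A\,R$, where $\widetilde A$ is the diagonal matrix of $\delta$-subdiagonal entries of $A$ and $R$ is the fixed ``exponential-type'' matrix with $R_{pq}=1/(q-p+\delta)!$ for $q\ge p-\delta$ and $0$ otherwise. One is thus reduced to showing $\widetilde A\,R$ is non-nilpotent for every nonzero diagonal $\widetilde A$; as $R_{pp}=1/\delta!\ne0$ this is clear ($\tr\widetilde A R\ne0$) when $\widetilde A$ has a single nonzero entry, and in general I expect it to follow from the total positivity of $R$ by checking that the power sums $\tr((\widetilde A R)^k)$ cannot all vanish. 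Should this computation resist, the fallback is a finer Newton polygon / Puiseux analysis of the escaping eigenvalue of $A_s\exp N$, or a downward induction on $\delta$ combined with the reduction step; but the non-nilpotency lemma seems the cleanest route, and it is where essentially all of the real work resides.
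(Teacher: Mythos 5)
Your reduction is sound and, in fact, lands exactly where the paper does: after passing to a unipotent power of $B$, using Newton's identities to upgrade the trace hypothesis to constancy of the characteristic polynomial of $A\exp(sN)$, and extracting the lowest nonzero subdiagonal of $A$ by the conjugation-and-scaling limit $s^{-\delta}D_sAD_s^{-1}\to M$, you arrive at the statement that $\widetilde{A}R$ must be non-nilpotent for every nonzero diagonal $\widetilde{A}$, where $R$ has entries $1/(q-p+\delta)!$. (Your scaling trick is a clean substitute for the paper's explicit binomial expansion of $(AB^n)^k$ and extraction of the top-degree coefficient in $n$; the resulting combinatorial statement is the same as the paper's Theorem~\ref{thm:pk}, up to transposition and padding by zero variables.) The gap is that this crucial claim is precisely where all the real content of the paper lies, and you leave it as an expectation. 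Total nonnegativity of $R$ (equivalently of the Pascal matrix, after rescaling rows and columns by factorials) is not by itself enough: many minors of the Pascal matrix vanish, and what is actually needed is that \emph{every principal} minor of $R$ is nonsingular. The paper proves this (Theorems~\ref{thm:positive}--\ref{thm:positive-m} and Lemma~\ref{lem:positive-m}) by factoring the Pascal matrix into elementary bidiagonal totally nonnegative factors and then exhibiting, via Cauchy--Binet, an explicit chain of index sets producing a strictly positive summand; none of this appears in your proposal, and your remark that the case of a single nonzero entry of $\widetilde{A}$ is clear does not extend, since the entries are arbitrary complex numbers and cancellation in $\tr((\widetilde{A}R)^k)$ is exactly the difficulty.

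Two smaller comments. First, once nonsingularity of all principal minors of $R$ is known, your non-nilpotency claim does follow quickly: if $S$ is the support of $\widetilde{A}$, the coefficient of the characteristic polynomial of $\widetilde{A}R$ in degree $|S|$ is $\pm\prod_{i\in S}\widetilde{a}_i\cdot\det(R_{S,S})\neq0$, so $\widetilde{A}R$ cannot be nilpotent; this would let you bypass the paper's substitution $x_i=y_i^2$ and its deletion induction, but it still requires the principal-minor theorem as input. Second, your passage $M\exp N\sim\exp N\cdot M$ is unnecessary and slightly off (the image of $\exp N\cdot M$ need not lie in $\langle e_{1+\delta},\dots\rangle$); working directly with $M\exp N$, whose image does lie in that subspace, gives the restriction $\widetilde{A}R$ as you intend. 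Also note the paper's main theorem is proved over $\C$; your hedges about positive characteristic (exterior powers, valuations over $\bF_p((s^{-1}))$) would need genuine additional work and are not part of what the paper establishes.
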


Corollary~\ref{cor:solvable} may be compared with classical results about groups of unipotent matrices in $\GL(V)$.
The well--known theorem of Lie-Kolchin~\cite{stein} from representation theory states that if
$G<\GL(V)$ for a finite dimensional vector space $V$, and if each element $g\in G$ is
unipotent, then $G$ is a nilpotent group.

Insofar as consequences of Proposition~\ref{thm:dt-intro} and Theorem~\ref{thm:main} for mapping class groups are concerned, we note the following.

\begin{cor}\label{cor:mcg-intro1}
Let $\rho\colon\Mod(S)\to\GL(V)$ be a finite dimensional representation and $S$ be a surface of genus
 $3$ or more.

  (a) If $\rho$ is unitary, then $\rho$ is not faithful.

  (b) If the base field of $V$ has positive characteristic, then $\rho$ is not faithful.
\end{cor}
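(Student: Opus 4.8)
The plan is to derive both statements directly from Proposition~\ref{thm:dt-intro} together with one elementary fact about quasi-unipotent matrices; notably, Theorem~\ref{thm:main} is not actually required for this particular corollary. The strategy is to exhibit an element of $\Mod(S)$ of infinite order whose image under $\rho$ has finite order, which immediately forces $\ker\rho\neq\{1\}$. The natural candidate is a Dehn twist $T=T_c$ about an essential simple closed curve $c\subset S$: on the one hand $T$ has infinite order in $\Mod(S)$ precisely because $c$ is essential, and on the other hand, since $S$ has genus at least $3$, Proposition~\ref{thm:dt-intro} guarantees that $\rho(T)$ is quasi-unipotent.

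The key lemma I would establish is that \emph{a quasi-unipotent matrix that is either unitary or has entries in a field of positive characteristic has finite order}. By definition of quasi-unipotence there is a $k>0$ with $U:=\rho(T)^k$ unipotent, so $N:=U-I$ is nilpotent, say $N^m=0$ with $m=\dim V$. For part (a): since $\rho$ is unitary, $U$ is a unitary, hence diagonalizable, matrix whose only eigenvalue is $1$, so $U=I$ and thus $\rho(T)^k=I$. For part (b): working in characteristic $p$, choose $s$ with $p^s\geq m$; since $\binom{p^s}{j}\equiv 0\pmod p$ for $0<j<p^s$, the Frobenius identity (applied to the commuting pair $I,N$) gives $U^{p^s}=(I+N)^{p^s}=I+N^{p^s}=I$, hence $\rho(T)^{kp^s}=I$. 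In either case $\rho(T)$ has finite order $r\geq 1$, so $T^r\in\ker\rho\setminus\{1\}$ and $\rho$ is not faithful.

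I do not expect a serious obstacle once Proposition~\ref{thm:dt-intro} is taken as given; the one place that needs genuine care is part (b), where diagonalizability is unavailable and the eigenvalue argument must be replaced by the characteristic-$p$ binomial identity. It is worth recording that the genus hypothesis enters only through Proposition~\ref{thm:dt-intro} (which can fail in genus $1$ and $2$), and that one could equally run the argument with $T$ replaced by a point-pushing homeomorphism along a simple loop: such an element is a product of two commuting Dehn twists, so its $\rho$-image is a product of commuting quasi-unipotent matrices and therefore again quasi-unipotent, while it has infinite order since surface groups are torsion-free. The Dehn-twist version is cleanest, as it uses nothing beyond Proposition~\ref{thm:dt-intro} and the linear algebra above.
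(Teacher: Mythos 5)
Your proposal is correct and follows essentially the same route as the paper: the paper proves this as Corollary~\ref{cor:not-faithful}, using Proposition~\ref{thm:mcg-unipotent} (that Dehn twist images are quasi-unipotent) and then exactly the two observations you make — quasi-unipotent unitary matrices have finite order, and unipotent (hence quasi-unipotent) matrices in characteristic $p$ have finite order — so the infinite-order Dehn twist lands in a finite-order matrix and the kernel is nontrivial. Your write-up simply supplies the linear-algebra details (diagonalizability, the Frobenius binomial identity) that the paper leaves implicit, and your remark that Theorem~\ref{thm:main} is not needed matches the paper's structure.
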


In part (a) of Corollary~\ref{cor:mcg-intro1}, we only consider the compact complex unitary groups $\mathrm{U}(n)$.
Part (b) of Corollary~\ref{cor:mcg-intro1} was first established by Button~\cite{button}.

\begin{cor}\label{cor:mcg-intro2}
Suppose $S$ is a surface with marked point and of genus at least two. If $\rho$ is a linear representation of $\Mod(S)$ and if
$\rho(T)$ has one Jordan block for a Dehn twist $T$ about a nonseparating closed curve, then
$\rho$ is not faithful.
\end{cor}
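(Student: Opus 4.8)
The plan is to locate a free subgroup of rank two inside $\Mod(S)$ generated by point--pushes along simple loops, and to show that a faithful $\rho$ would make its image solvable via Theorem~\ref{thm:main}, which is impossible. Let $\bar S$ be the closed surface obtained from $S$ by forgetting the marked point $x$. Since the genus is at least two, the Birman exact sequence provides an injective homomorphism $\mathrm{Push}\colon\pi_1(\bar S,x)\hookrightarrow\Mod(S)$ onto the kernel of the forgetful map $\Mod(S)\to\Mod(\bar S)$, with the equivariance $\phi\,\mathrm{Push}(\gamma)\,\phi^{-1}=\mathrm{Push}(\phi_*\gamma)$ for every $\phi\in\Mod(S)$. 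Choose simple loops $a,b$ based at $x$ with $i(a,b)=1$; a regular neighborhood of $a\cup b$ is a one--holed torus whose boundary is freely homotopic to $[a,b]$, and since the genus of $\bar S$ is at least two this boundary is essential, so the neighborhood is $\pi_1$--injective and $\langle a,b\rangle\cong F_2$ inside $\pi_1(\bar S,x)$; hence $\langle\mathrm{Push}(a),\mathrm{Push}(b)\rangle\cong F_2$ inside $\Mod(S)$. Put $A=\rho(\mathrm{Push}(a))$ and $B=\rho(\mathrm{Push}(b))$. If $\rho$ were faithful then $\langle A,B\rangle\cong F_2$, hence not solvable, and I will derive a contradiction.

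Two of the three hypotheses of Theorem~\ref{thm:main} are easy. First, $\mathrm{Push}(b)=T_{b_1}T_{b_2}^{-1}$, where $b_1,b_2$ are the boundary curves of a regular neighborhood of $b$; these are disjoint, so $\rho(T_{b_1})$ and $\rho(T_{b_2})$ commute, and each is quasi--unipotent by Proposition~\ref{thm:dt-intro}, whence $B$ is quasi--unipotent, and likewise $A$. Second, $b_1$ is isotopic to $b$ in $\bar S$ and crosses $a$ once, so $T_{b_1}$ acts on $\pi_1(\bar S,x)$ by the automorphism sending $a\mapsto ab$, fixing $b$, and fixing the remaining standard generators, after replacing $b$ by $b^{-1}$ if necessary. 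Equivariance of $\mathrm{Push}$ then gives, for all $n$,
\[
\rho(T_{b_1})^{\,n}\,A\,\rho(T_{b_1})^{-n}=\rho\!\left(\mathrm{Push}(ab^n)\right)=\rho\!\left(\mathrm{Push}(a)\mathrm{Push}(b)^n\right)=AB^n,
\]
so $AB^n$ is conjugate to $A$ in $\GL(V)$ and therefore $\tr((AB^n)^k)=\tr(A^k)$ is independent of $n$ for all $k$. (Equivalently, $ab^n$ is a simple loop for every $n$, so all the $\mathrm{Push}(ab^n)$ are conjugate in $\Mod(S)$.)

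The remaining hypothesis---that $B$ has a single Jordan block---is where the work lies, and I expect it to be the main obstacle. Here the hypothesis on $\rho(T)$ enters: all Dehn twists about nonseparating simple closed curves are conjugate in $\Mod(S)$, so both $\rho(T_{b_1})$ and $\rho(T_{b_2})$ are conjugate to $\rho(T)$, hence each is a single Jordan block of size $\dim V$ with a common eigenvalue $\zeta$, a root of unity, and the two commute. The commutant of the regular matrix $\rho(T_{b_1})$ is a local Artinian algebra over the ground field, and $B=\rho(T_{b_1})\rho(T_{b_2})^{-1}$ is a unipotent element of it; $B$ has a single Jordan block precisely when $B-I$ generates the maximal ideal, equivalently when $\rho(T_{b_2})$ is not tangent to $\rho(T_{b_1})$ to second order. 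To exclude the degenerate case I would exploit the extra relations available in the one--holed torus: the identity $\rho(T_{b_1})^nA\rho(T_{b_1})^{-n}=AB^n$ (for all $n$), its counterpart obtained by interchanging $a$ and $b$ (note $\rho(T_{a_1})$ commutes with $A$ and is again a single Jordan block), and $[\mathrm{Push}(a),\mathrm{Push}(b)]=\mathrm{Push}([a,b])$ (whose image is once more a commuting product of Dehn twists, hence quasi--unipotent) constrain the joint structure of $A$, $\rho(T_{b_1})$ and $\rho(T_{b_2})$; verifying that these constraints force the needed nondegeneracy is the technical heart of the argument. Granting this, Theorem~\ref{thm:main} produces a common eigenvector of $A$ and $B$, so Corollary~\ref{cor:solvable} shows $\langle A,B\rangle$ is solvable, contradicting $\langle A,B\rangle\cong F_2$. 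Therefore $\rho$ is not faithful.
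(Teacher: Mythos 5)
Your reduction to Theorem~\ref{thm:main} stalls exactly where you say it does, and that gap is fatal as written. The corollary's hypothesis controls only the Jordan type of $\rho(T)$ for a Dehn twist $T$; it tells you that $\rho(T_{b_1})$ and $\rho(T_{b_2})$ are each single Jordan blocks, but $B=\rho(T_{b_1})\rho(T_{b_2})^{-1}$ is a quotient of two commuting single-block matrices, and such a quotient can have arbitrary unipotent Jordan type: since the centralizer of a nonderogatory matrix is $\C[\rho(T_{b_1})]$, the matrix $\rho(T_{b_2})$ is a polynomial in $\rho(T_{b_1})$, and nothing in the constraints you list (the conjugation identities $\rho(T_{b_1})^nA\rho(T_{b_1})^{-n}=AB^n$, their counterparts with $a$ and $b$ interchanged, quasi-unipotence of $\rho(\mathrm{Push}([a,b]))$) visibly prevents $B-I$ from failing to generate the maximal ideal, i.e.\ prevents $B$ from having several Jordan blocks. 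You write ``granting this,'' but this nondegeneracy is the entire technical content of converting the corollary's hypothesis into the hypothesis of Theorem~\ref{thm:main}, and no argument is offered; moreover, if the nondegeneracy simply fails for the candidate faithful $\rho$, your argument concludes nothing. (The preliminary steps --- quasi-unipotence of $A,B$ via Lemma~\ref{lem:birman}, and constancy of $\tr((AB^n)^k)$ because $ab^n$ is a simple nonseparating loop for all $n$ --- are fine, and are the same ingredients as the paper's Corollary~\ref{cor:jordan}; but that corollary \emph{assumes} the single-block condition on the point-push, which is precisely what you cannot supply here.)

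The paper's proof of this statement (Proposition~\ref{prop:mcg-easy}) avoids Theorem~\ref{thm:main} entirely and is much more elementary: take nonseparating curves $b,c$ disjoint from the twisting curve $a$ with $i(b,c)\neq 0$ (possible in genus at least two). Since $\rho(T_a)$ is a single Jordan block, it preserves a unique complete flag and every matrix commuting with it is upper triangular in the corresponding basis; hence the centralizer of $\rho(T_a)$ in $\GL(V)$ is solvable, and it contains $\rho(T_b)$ and $\rho(T_c)$. But $\langle T_b,T_c\rangle$ contains a nonabelian free group, so $\rho$ is not injective. Note that your own observation that the commutant of a regular matrix is the local \emph{commutative} algebra $\C[\rho(T_{b_1})]$ already hands you this argument: two Dehn twists about curves disjoint from $a$ but intersecting each other would have commuting images while not commuting in $\Mod(S)$. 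That is the route to take here, rather than forcing the statement through Theorem~\ref{thm:main}.
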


While it is true that Corollary~\ref{cor:mcg-intro2} follows from Theorem~\ref{thm:main} and Proposition~\ref{thm:dt-intro}, at least when
the genus of $S$ is sufficiently large, we can give a much more elementary proof of this fact. See Proposition~\ref{prop:mcg-easy} below.

Finally, we remark that the ideas of this paper cannot suffice by themselves to establish nonlinearity of the mapping class group, and
in particular the hypothesis in Theorem~\ref{thm:main} that the Jordan Canonical Form consists of a single block cannot be dropped.
Indeed, the work of Hadari~\cite{hadari} shows that non-virtually solvable subgroups of mapping class groups admit homological
representations with non-virtually solvable image, which in particular applies to the copy of $\pi_1(S)$ appearing in the Birman Exact
Sequence as discussed earlier.

In fact, most of the known families of representations of mapping class groups do not fall under the purview of
Corollary~\ref{cor:mcg-intro2}. The main families of naturally occurring representations which exhaust the whole mapping class group are
homological representations (arising from homology actions on finite covers of the base surface;
see~\cite{GL09,GLLM,hadari17,koberdagd,liu,looijenga,mcmullen,putwiel,sun} for instance), and
TQFT representations (arising from
certain functorial constructions; see~\cite{BHMV,RT,turaev,witten}, for instance).

In the case of homological representations, a Dehn twist is sent to a (virtual) transvection,
powers of which have matrix norm which grows linearly in the power taken. If $M$ is a unipotent
matrix with a single Jordan block of dimension at
least three, then the matrix norm of $M^n$ grows as a nonlinear polynomial in $n$ and is therefore not a transvection.

In the case of TQFT representations, Dehn twists are mapped to finite order matrices, whence some nonzero power of the  twist is sent
to the identity.  Then, the number of Jordan blocks coincides with the dimension of the representation.

\subsection{Structure of the paper}

The majority of the paper will be spent in establishing Theorem~\ref{thm:main}. First, we will reduce the result to a combinatorial statement
about the triviality of solutions to certain polynomial equations. Then, we will give a proof of the result using 
the non--negativity of certain matrices with binomial coefficients.

Structurally, we will gather relevant results about representation theory of mapping class groups first, in Section~\ref{sec:mcg}. Next,
we reduce the proof of Theorem~\ref{thm:main} to polynomial identities in Section~\ref{sec:main}, and prove the main result. 

\subsection{Remarks on motivation}
As is suggested by Corollary~\ref{cor:mcg-intro1}, much of the discussion in this paper is motivated by the problem of whether mapping
class groups are linear. This question has had a long history, and has been resolved in several cases. Notably, Bigelow~\cite{Bigelow} and
Krammer~\cite{Krammer} proved that braid groups are linear, and Bigelow--Budney~\cite{BigBud} proved mapping class groups in genus two
are linear. Building on these ideas, Korkmaz~\cite{korkmaz} proved that the hyperelliptic mapping class groups are also linear.
Mapping class groups tend to share many properties with automorphism groups of free groups, and these latter groups are known
to be nonlinear for a free group of rank $3$ or more by a result of Formanek--Procesi~\cite{FP}.
The reason for the nonlinearity of automorphism groups of free groups comes from certain
toxic subgroups which are known to be absent in mapping class groups by the work of Brendle--Hamidi-Tehrani~\cite{Brendle}.

\section{Representations of mapping class groups}\label{sec:mcg}
Before proving Theorem~\ref{thm:main}, we will detail some of its applications to the groups which inspired the present article, namely
mapping class groups of surfaces.
Throughout this section, we set $S$ to be an orientable connected surface of finite type, of genus $g\geq 3$.
As before, we write $\Mod(S)$ for the mapping class group of $S$, which is to say the group
of homotopy classes of orientation-preserving homeomorphisms of $S$. The primary purpose of this section is to prove the following fact:

\begin{prop}\label{thm:mcg-unipotent}
Let $\rho\colon \Mod(S)\to\GL_n(\C)$ be a representation of the mapping class group of a surface of genus $3$ or more, and let $T\in\Mod(S)$ be a Dehn twist. Then $\rho(T)$ is quasi-unipotent.
\end{prop}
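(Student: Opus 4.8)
The plan is to use only two structural facts about Dehn twists: that all Dehn twists about nonseparating simple closed curves are mutually conjugate in $\Mod(S)$ (the change of coordinates principle), and that Dehn twists satisfy the lantern relation and the chain relations. First I would reduce and normalize. By change of coordinates, $T_\gamma$ and $T_{\gamma'}$ are conjugate in $\Mod(S)$ whenever $\gamma,\gamma'$ are nonseparating, so $M:=\rho(T_\gamma)$ has a conjugacy class in $\GL_n(\C)$ that does not depend on $\gamma$; in particular $M$ has a well-defined characteristic polynomial $p(x)=\prod_{i=1}^n(x-\lambda_i)$, and it suffices to show that each $\lambda_i$ is a root of unity. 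Because $g\ge 3$, one can embed a four-holed sphere in $S$ with all seven curves of the lantern relation $T_{b_1}T_{b_2}T_{b_3}T_{b_4}=T_{x_1}T_{x_2}T_{x_3}$ nonseparating in $S$; this is where the hypothesis $g\ge 3$ is used. Applying $\rho$ and comparing determinants gives $(\det M)^4=(\det M)^3$, so $\det M=1$, i.e.\ $\prod_i\lambda_i=1$. (This is the classical computation behind $H_1(\Mod(S);\Z)=0$ for $g\ge 3$.)

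The real work is to promote $\det M=1$ to the statement that every $\lambda_i$ is a root of unity. The mechanism I would target is the elementary observation that if a finite multiset $E\subset\C^{\times}$ satisfies $\{z^k:z\in E\}=E$ for some fixed integer $k\ge 2$, then $E$ consists of roots of unity: the map $z\mapsto z^k$ then permutes $E$, so each $z\in E$ satisfies $z^{k^p}=z$, hence $z^{k^p-1}=1$, for some $p$. To produce such a relation for $E=\{\lambda_i\}$ I would feed the chain relations through $\rho$: the even chain relation $(T_{c_1}\cdots T_{c_{2g}})^{4g+2}=1$ (valid in the closed surface because a chain of $2g$ curves fills $S$ off a disk), together with the odd chain relations $(T_{c_1}\cdots T_{c_k})^{k+1}=T_{\partial_1}T_{\partial_2}$ for $k$ odd, whose two disjoint boundary curves $\partial_1,\partial_2$ can, in a suitable embedding, be taken nonseparating in $S$. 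Within a chain consecutive twists satisfy the braid relation and non-consecutive twists commute, and every twist appearing is conjugate to $M$; a careful common-eigenvector analysis of these relations should constrain the joint spectrum of the commuting sub-products enough to force $\{\lambda_i\}$ to be stable under a power map $z\mapsto z^k$. Granting this, $M$ is quasi-unipotent, hence so is $\rho(T)$ for every nonseparating Dehn twist $T$; a twist about a separating curve is a fixed power of a product of nonseparating twists by a chain relation, and the same bookkeeping together with $\det=1$ settles that case as well.

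The main obstacle is this second step, and it is essentially nonabelian. Every invariant of $M$ of the form ``$\det$ composed with an algebraic representation of $\GL_n$'' is a power of $\det M$ and so records nothing beyond $\prod_i\lambda_i=1$; thus the determinant bookkeeping of the first step is genuinely exhausted, and one must exploit the braid relations inside the chains --- equivalently, the rigidity of representations of the relevant Artin-type quotients of $\Mod(S)$ --- to control the remaining eigenvalues. Converting the common-eigenvector relations into the clean assertion that $\{\lambda_i\}$ is power-map stable, rather than into a single multiplicative identity among the $\lambda_i$, is the delicate point, and is where I would expect most of the effort to go.
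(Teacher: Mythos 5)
There is a genuine gap. Your first step (all nonseparating twists are conjugate, and the lantern relation forces $\det\rho(T)=1$) is correct, but everything after it is a hope rather than an argument. The entire burden of the proposition sits in your second step, and you never actually produce the mechanism: you assert that ``a careful common-eigenvector analysis'' of the chain and braid relations ``should'' force the eigenvalue multiset $\{\lambda_i\}$ of $\rho(T)$ to be stable under a power map $z\mapsto z^k$, and then you say ``granting this.'' Nothing in the relations you invoke visibly yields such stability: the braid and chain relations relate \emph{products} of twists to each other or to central/boundary elements, not $T$ to a proper power of $T$, and $T$ is not conjugate to $T^k$ in $\Mod(S)$ for $k\geq 2$, so there is no conjugation available to permute the spectrum by $z\mapsto z^k$. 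As you yourself note, determinant-type invariants are exhausted after step one, so the proposal as written reduces the proposition to an unproved (and, along this route, quite unclear) spectral claim. The separating-twist case is likewise waved through.

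The missing idea, which is how the paper closes exactly this gap, is to localize the determinant argument to the generalized eigenspaces of $\rho(T)$ using the \emph{centralizer} of $T$. Cutting $S$ along $\gamma$ and applying the lantern relation inside the cut surface, one writes $T=\prod_{i=1}^{3}[\phi_i,B_i]$ where each $\phi_i$ and each $B_i$ commutes with $T$ (the $\phi_i$ are homeomorphisms supported in the complement of $\gamma$, fixing the boundary, carrying $\gamma_i$ to $\beta_i$); this is where $g\geq 3$ enters. Since the $\rho(\phi_i),\rho(B_i)$ commute with $\rho(T)$, they preserve each generalized eigenspace $W$ of $\rho(T)$, so restricting the commutator identity to $W$ gives $\det\bigl(\rho(T)|_W\bigr)=1$, i.e.\ $\lambda^{\dim W}=1$ for the corresponding eigenvalue $\lambda$. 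This gives root-of-unity eigenvalues one eigenspace at a time, with no need for any power-map stability of the whole spectrum, and it applies verbatim to separating twists as well. If you want to salvage your outline, replace the chain-relation step by this centralizer refinement of your own determinant computation.
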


\subsection{Centralizers and products of commutators}

The following lemma applies to general groups. An almost identical argument can be found as Lemma 2.5 in~\cite{AramayonaSouto}, in the
context of mapping class groups.

\begin{lem}\label{lem:central-comm}
Let $G$ be a group and let $g \in G$ be an element which is the product of commutators \[g=\prod_{i=1}^n [x_i, y_i]\] such that
the commutators $[x_i,g]$ and $[y_i,g]$ are trivial for all $i$.
If \[\rho\colon G\to \GL_n(\C)\] is an arbitrary
representation,  then  $\rho(g)$ is quasi-unipotent.
\end{lem}
\begin{proof}
Note that $\det\rho(g)=1$, since $g$ is a product of commutators.
If $h\in G$ is another element commuting with $g$, the  $h$ preserves each (generalized) eigenspace of $\rho(g)$. Thus, if $\lambda$
is an eigenvalue of $\rho(g)$ with generalized eigenspace $W$,
then $W$ is invariant under $\rho(x_i)$ and $\rho(y_i)$ for each $i$.
Restricting to such an eigenspace $W$, the fact that \[g=\prod_{i=1}^n [x_i, y_i]\] implies that $\det( \rho(g)|_W)=1$,
and consequently that the corresponding eigenvalue of $\rho(g)$ must be a root of unity.
Decomposing $\C^n$ as a direct sum of generalized eigenspaces of $g$, we conclude that $\rho(g)$ is quasi-unipotent.
\end{proof}

\subsection{Dehn twists}
For generalities on mapping class groups used here and in the sequel, we refer the reader to~\cite{Farb}.
Let $S$ be a surface of genus $3$ or more, and let $\gamma\subset S$ be an essential simple closed curve.
We write $T=T_{\gamma}$ for the Dehn twist about $\gamma$.

\begin{lem}\label{lem:dehn-twist}
There exist six elements \[\{x_1,x_2,x_3, y_1,y_2,y_3\} \subset \Mod(S)\]
such that \[T=\prod_{i=1}^3 [x_i, y_i],\] and such that the commutators $[T,x_i]$ and $[T,y_i]$ are trivial for all $i$.
\end{lem}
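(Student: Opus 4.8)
The plan is to realize the Dehn twist $T_\gamma$ as an explicit product of three commutators inside the stabilizer of $\gamma$, using the "lantern relation" as the conceptual backbone. Recall that the lantern relation expresses a product of four boundary twists on a four-holed sphere as a product of three twists along interior curves; rearranged, it writes a single twist as a product of three commutators, provided we have enough room inside the surface to find the relevant subsurfaces. Since $S$ has genus at least $3$ and $\gamma$ is essential, I would first observe that $\gamma$ bounds (on one side, or can be isotoped into) an embedded subsurface of $S$ of the appropriate topological type -- concretely, $\gamma$ can be taken as one boundary component of a four-holed sphere $\Sigma\subset S$ whose other three boundary curves are themselves essential simple closed curves. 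The genus hypothesis is exactly what guarantees such a $\Sigma$ exists regardless of whether $\gamma$ is separating or nonseparating.

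Next I would write down the lantern relation on $\Sigma$. Label the boundary curves of $\Sigma$ as $\partial_0=\gamma,\partial_1,\partial_2,\partial_3$ and the three interior curves as $\alpha_{12},\alpha_{23},\alpha_{13}$, so that the lantern relation reads
\[
T_{\partial_0}T_{\partial_1}T_{\partial_2}T_{\partial_3}=T_{\alpha_{12}}T_{\alpha_{23}}T_{\alpha_{13}}.
\]
Each of $\partial_1,\partial_2,\partial_3$ is disjoint from $\gamma$, hence $T_{\partial_j}$ commutes with $T=T_\gamma$; I can therefore move the boundary twists to one side and obtain an expression of the form $T = T_{\alpha_{12}}T_{\alpha_{23}}T_{\alpha_{13}} \cdot (T_{\partial_1}T_{\partial_2}T_{\partial_3})^{-1}$, all of whose factors commute with $T$. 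The key remaining point is to recognize each of the three interior twists $T_{\alpha_{ij}}$, modulo the (commuting) boundary twists, as an honest commutator $[x_i,y_i]$ with $x_i,y_i$ again commuting with $T$. This is the standard way the lantern relation is used to show $T$ is a product of three commutators, e.g. as in the proof that $H_1(\Mod(S))$ is torsion (and in fact trivial for $g\ge 3$); one exhibits a mapping class $z_{ij}$ supported in a neighborhood of $\gamma\cup\alpha_{ij}$ that conjugates $\alpha_{ij}$ to another curve bounding with it, yielding the commutator form, and all such $z_{ij}$ are supported in a subsurface containing $\gamma$ that is homeomorphic to $\gamma \times [0,1]$ together with a handle, so they centralize $T_\gamma$.

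The main obstacle I anticipate is bookkeeping rather than conceptual: one must choose the elements $x_i,y_i$ so that \emph{simultaneously} (i) their product of commutators really equals $T$ on the nose (not merely up to a central factor), and (ii) every one of the six elements commutes with $T$. Condition (ii) forces all six elements to lie in the centralizer $C_{\Mod(S)}(T)$, which by a theorem of~\cite{Farb} is (up to finite index) generated by $T_\gamma$ together with the mapping class group of the complement $S\setminus\gamma$ (suitably interpreted); so the real content is checking that the lantern configuration $\Sigma$, and the auxiliary conjugating elements, can all be pushed into this complement. I would handle this by choosing $\Sigma$ to lie in (a neighborhood of) a genus-one subsurface disjoint from $\gamma$ union an annular neighborhood of $\gamma$ -- possible since $g\ge 3$ leaves a genus-$\ge 2$ piece even after removing an annulus and the lantern piece -- and then all commutators are supported away from $\gamma$ except for the single boundary twist $T_\gamma$ that must appear with multiplicity one, which one arranges by taking $\partial_0 = \gamma$ as above. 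Finally I would double-check the exponents in the rearranged lantern relation so that exactly one factor of $T_\gamma$ survives and the remaining factors group cleanly into the three commutators $[x_i,y_i]$, $i=1,2,3$; this is the step where a sign or orientation error is easiest to make, so I would verify it by abelianizing (the relation must become trivial in $H_1$) and by a direct check on a concrete model surface.
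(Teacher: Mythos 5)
Your strategy is the same as the paper's: make $\gamma$ one boundary component of an embedded lantern, rearrange the lantern relation so that $T=T_\gamma$ is a product of three factors of the form (interior twist)(boundary twist)$^{-1}$ (legitimate, since the three other boundary twists commute with all seven twists involved), and then realize each such factor as a commutator $[\phi,T_\delta]$ where $\phi$ is a change-of-coordinates homeomorphism commuting with $T$. The paper implements this by cutting $S$ along $\gamma$ to obtain a surface $X$ of genus at least two, embedding the lantern in $X$ so that one of its boundary curves is the copy of $\gamma$ and the other three boundary curves, as well as the relevant interior curves, are all \emph{nonseparating in $X$}, and then taking each conjugator $\phi_i$ to be a homeomorphism of $X$ carrying one paired curve to the other and restricting to the identity on $\partial X$; extended by the identity to $S$, these visibly commute with $T$.

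The gap in your proposal is exactly at your ``key remaining point.'' Your stated justification that the conjugators centralize $T$ --- that they are ``supported in a subsurface containing $\gamma$ \dots so they centralize $T_\gamma$'' --- is a non sequitur: a mapping class supported in a subsurface containing $\gamma$ commutes with $T_\gamma$ only if it fixes the isotopy class of $\gamma$, which is precisely what must be arranged; moreover, since the interior curves $\alpha_{ij}$ of the lantern are disjoint from $\gamma=\partial_0$, ``a neighborhood of $\gamma\cup\alpha_{ij}$'' is a disjoint union of two annuli and supports no conjugator of the required kind. What is missing is the precise condition that makes the commutator step work: each interior curve must be carried to its paired boundary curve by a homeomorphism supported in the complement of $\gamma$ (a homeomorphism of the cut surface, identity on its boundary), and for such a homeomorphism to exist the two curves must have the same topological type in the cut surface --- the paper arranges all of them to be nonseparating there, and this is where the genus hypothesis is genuinely used. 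Your later remark that ``all commutators are supported away from $\gamma$'' is the correct fix and matches the paper, but as written the proposal defers to ``bookkeeping'' the choice of lantern and conjugators that makes that sentence true, and the one mechanism you do offer for centralization would fail.
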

\begin{proof}
Cut the surface $S$ open along $\gamma$ to obtain a subsurface $X$ of  genus at least two. If $\gamma$ was separating, then $\gamma$ forms a boundary component of $X$. If $\gamma$
was nonseparating, then the genus of $X$ is exactly one less than the genus of $S$, and $X$  acquires two extra boundary components
from $\gamma$.

Since $X$ has genus at least two, one can embed the sphere $S_{0,4}$ in $X$ such that
exactly one of the boundary components of $X$
arising from $\gamma$ (which we will also call $\gamma$)
 is a boundary component of $S_{0,4}$, and where all other three boundary components
$\{\gamma_1, \gamma_2, \gamma_3\}$ of $S_{0,4}$ are non-separating simple loops in $X$.

Applying the lantern relation \cite{dehn,Farb}, we have \[T=T_1^{-1}B_1T_2^{-1}B_2T_3^{-1}B_3\] where
$T_i$ is the Dehn twist along $\gamma_i$ and $B_i$ is the Dehn twist along a certain simple closed loop $\beta_i$
which is essential in $S_{0,4}$ and nonseparating in $X$.
In particular, there exist orientation preserving self-homeomorphisms $\phi_i$ of $X$ sending
$\gamma_i$ to $\beta_i$ and such that the restriction $\phi_i|_{\partial X}$ is the identity.
Extending $\phi_i$ to be a self-homeomorphism of $S$,
retaining the notation $\phi_i$, we may arrange for $\phi_i$ to commute with $T$. We thus see that
\[T = [\phi_1, B_1][\phi_2, B_2][\phi_3, B_3]\] and the commutators $[T,B_i]$ and $[T, \phi_i]$ are trivial for each $i$.
\end{proof}

Proposition~\ref{thm:mcg-unipotent} is now immediate.
The following is a straightforward corollary of Lemma~\ref{lem:central-comm} and Lemma~\ref{lem:dehn-twist}:

\begin{cor}\label{cor:not-faithful}
Let $\rho\colon \Mod(S)\to\GL_n(F)$ be a representation, where  $F$ is a field. Suppose that $F$ has characteristic $p$, or suppose
that $F=\C$ and the image of $\rho$ is unitary. Then $\rho$ is not faithful.
\end{cor}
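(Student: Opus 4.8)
The plan is to combine the commutator decomposition of a Dehn twist with the elementary fact that, both in positive characteristic and in the compact unitary case, a quasi-unipotent matrix has finite order, and then to invoke that a Dehn twist about an essential simple closed curve has infinite order in $\Mod(S)$.

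First I would check that every Dehn twist is sent to a quasi-unipotent matrix, regardless of the field $F$. Fixing a Dehn twist $T$, Lemma~\ref{lem:dehn-twist} writes $T=\prod_{i=1}^3[x_i,y_i]$ with $[T,x_i]=[T,y_i]=1$; this is purely group-theoretic. After extending scalars to the algebraic closure $\overline F$, the proof of Lemma~\ref{lem:central-comm} applies verbatim: each generalized eigenspace $W_\lambda$ of $\rho(T)$ is preserved by $\rho(x_i)$ and $\rho(y_i)$ since they commute with $\rho(T)$, so $\rho(T)|_{W_\lambda}$ is again a product of three commutators, whence $\det(\rho(T)|_{W_\lambda})=\lambda^{\dim W_\lambda}=1$ and $\lambda$ is a root of unity. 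As nothing in that argument uses characteristic zero, $\rho(T)$ is quasi-unipotent in both settings of the corollary.

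Next I would upgrade quasi-unipotence to finite order. If $F$ has characteristic $p$, then some power $\rho(T)^k=I+N$ is unipotent with $N^n=0$; since $I$ and $N$ commute and $p\cdot1=0$ in $F$, the binomial theorem yields $\rho(T)^{kp^j}=(I+N)^{p^j}=I+N^{p^j}=I$ once $p^j\ge n$. If instead $F=\C$ and $\rho$ has unitary image, then $\rho(T)$ is normal, hence diagonalizable with all eigenvalues roots of unity, and its order divides the least common multiple of their orders. In either case $\rho(T)^M=I$ for some $M\ge1$.

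Finally, a Dehn twist about an essential simple closed curve has infinite order in $\Mod(S)$, so $T^M\neq1$ while $\rho(T^M)=\rho(T)^M=I$, giving $\ker\rho\neq\{1\}$ and non-faithfulness of $\rho$. I do not expect a genuine obstacle: the only points needing care are verifying that the determinant argument in Lemma~\ref{lem:central-comm} is insensitive to the characteristic (it is) and recording the infinitude of the order of a Dehn twist (standard).
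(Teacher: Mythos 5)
Your proposal is correct and takes essentially the same route as the paper: Dehn twists map to quasi-unipotent matrices (via Lemma~\ref{lem:central-comm} and Lemma~\ref{lem:dehn-twist}), and in both the positive-characteristic and unitary settings quasi-unipotence forces $\rho(T)$ to have finite order, contradicting the infinite order of a Dehn twist. The details you add (extension of scalars to $\overline{F}$ and the Frobenius/binomial computation) are simply the steps the paper leaves implicit.
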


As remarked in the introduction,
Corollary~\ref{cor:not-faithful} in the case of a field with positive characteristic was obtained by J. Button \cite{button}.

\begin{proof}[Proof of Corollary~\ref{cor:not-faithful}]
In the first case, if $F$ has characteristic $p$ then unipotent matrices in $\GL_n(F)$ have finite order, so that Dehn twists about simple closed curves will
not have infinite order under $\rho$. In the second case,
compact unitary groups contain no nontrivial unipotent elements, so that quasi-unipotent unitary matrices have
finite order, so that again Dehn twists cannot have infinite order under $\rho$.
\end{proof}

\subsection{The Birman Exact Sequence}

Let $\Mod(S,p)$ denote the mapping class group of $S$ with a marked point $p$ in the interior of $S$. There is a well-known exact sequence
known as the Birman Exact Sequence given by \[1\to\pi_1(S)\to\Mod(S,p)\to\Mod(S)\to 1,\] where the map $\Mod(S,p)\to\Mod(S)$ is the map
which forgets the marked point.  The subgroup $\pi_1(S)<\Mod(S,p)$ is called the \emph{point--pushing subgroup}.

The group $\pi_1(S)$ is generated by homotopy classes of simple closed curves based at $p$. If $\gamma$ is a simple closed loop based at $p$, then the element $\gamma$ viewed as an element of $\Mod(S,p)$ is given by a product $T_{\gamma_1}T_{\gamma_2}^{-1}$ of Dehn
twists about parallel copies $\gamma_1$ and $\gamma_2$ of $\gamma$, such that $\gamma_1$ and $\gamma_2$ cobound an annulus
containing the marked point $p$. We obtain the following fact immediately from the observation that $T_{\gamma_1}$ and $T_{\gamma_2}$ commute with each other, and that $\Mod(S,p)$ acts transitively on the
set of nonseparating simple closed curves on $S$.

\begin{lem}\label{lem:birman}
Let $\rho\colon \Mod(S,p)\to\GL_n(\C)$ be a representation. The image $\rho(\pi_1(S))$ of point--pushing subgroup is generated by quasi-unipotent matrices. Moreover, if $a,b\in\pi_1(S)$ are based loops with geometric intersection number exactly one, then for all $n$ we have
that the matrices $\rho(a^nb)$, $\rho(ab^n)$, $\rho(ba^n)$, and $\rho(b^na)$ are all quasi-unipotent and conjugate in $\GL_n(\C)$.
\end{lem}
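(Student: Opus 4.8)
The plan is to treat the two assertions separately, reducing both to the quasi-unipotence of Dehn twists and, for the second, to the change-of-coordinates principle for nonseparating simple closed curves. First I note that the proof of Proposition~\ref{thm:mcg-unipotent} (via Lemmas~\ref{lem:central-comm} and~\ref{lem:dehn-twist}) goes through verbatim with $\Mod(S,p)$ in place of $\Mod(S)$, since puncturing a surface of genus at least three leaves more than enough room to run the lantern-relation construction of Lemma~\ref{lem:dehn-twist}; hence $\rho$ sends every Dehn twist in $\Mod(S,p)$ to a quasi-unipotent matrix.

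For the first assertion, recall that $\pi_1(S)$ is generated by homotopy classes of simple closed loops based at $p$ (one may take a standard geometric generating set), so it suffices to show $\rho(\gamma)$ is quasi-unipotent when $\gamma$ is represented by such a loop; we may assume $\gamma$ is essential in $S$, else $\gamma$ is trivial. As recorded just before the statement, $\gamma = T_{\gamma_1}T_{\gamma_2}^{-1}$ in $\Mod(S,p)$, where $\gamma_1,\gamma_2$ are disjoint parallel copies of $\gamma$ cobounding an annulus through $p$; then $T_{\gamma_1}$ and $T_{\gamma_2}$ commute, so $\rho(T_{\gamma_1})$ commutes with $\rho(T_{\gamma_2})^{-1}$, and both are quasi-unipotent by the previous paragraph. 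Simultaneously triangularizing this commuting pair over $\C$, the product $\rho(\gamma)$ becomes upper triangular with diagonal entries products of roots of unity, and is therefore quasi-unipotent.

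For the second assertion, let $a,b$ be simple based loops meeting once. A regular neighborhood of $a\cup b$ is a one-holed torus containing $p$, and an elementary slope computation there --- equivalently, the fact that $T_a$ and $T_b$ are homeomorphisms, so $T_a^{n}(b)$ and $T_b^{n}(a)$ are simple --- shows that each of $a^nb,\ ab^n,\ ba^n,\ b^na$ is represented by a simple closed loop based at $p$, necessarily nonseparating since it is a homeomorphic image of the nonseparating curve $a$ (or $b$). The conjugation action of $\Mod(S,p)$ on its normal subgroup $\pi_1(S)$ is, up to inner automorphisms, induced by homeomorphisms of $(S,p)$ acting on based loops (a standard property of the Birman exact sequence); so, using that $\Mod(S,p)$ acts transitively on \emph{oriented} nonseparating simple closed curves, there are mapping classes carrying the point-push about $a$ to each of $a^nb,\ ab^n,\ ba^n,\ b^na$, up to an inner automorphism of $\pi_1(S)$. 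Applying $\rho$, and noting that inner automorphisms become conjugation by elements of $\rho(\pi_1(S))$, we conclude that $\rho(a^nb),\ \rho(ab^n),\ \rho(ba^n),\ \rho(b^na)$ are all conjugate in $\GL_n(\C)$ to $\rho(a)$, which is quasi-unipotent by the first assertion; transitivity of conjugacy finishes the proof.

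The step I expect to require the most care is the geometric input to the second assertion: checking that $a^nb$, $ab^n$, $ba^n$, $b^na$ are genuinely (nonseparating) simple closed loops --- this is where the one-holed-torus structure is used --- and invoking the change-of-coordinates principle in its \emph{oriented} form, so that the conjugacies produced in $\GL_n(\C)$ hold on the nose rather than merely up to replacing a matrix by its inverse.
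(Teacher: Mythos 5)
Your proposal is correct and takes essentially the same route as the paper, which deduces the lemma directly from the observations preceding it: a point--push is a product of two commuting Dehn twists whose images are quasi-unipotent by Proposition~\ref{thm:mcg-unipotent}, and the conjugacy statement follows from the transitivity of the $\Mod(S,p)$--action on nonseparating simple closed curves together with the fact that $a^nb$, $ab^n$, $ba^n$, $b^na$ are simple. Your extra care about oriented change of coordinates and about verifying simplicity and nonseparation of these loops just makes explicit what the paper leaves implicit.
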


We finally obtain the following consequence of Theorem~\ref{thm:main}.

\begin{cor}\label{cor:jordan}
Let $\rho\colon \Mod(S,p)\to\GL_n(\C)$ be a representation, and suppose that for some nonseparating simple closed homotopy class of curves $a\in\pi_1(S)<\Mod(S,p)$, we have that $\rho(a)$ has a single Jordan block. Then $\rho$ is not faithful.
\end{cor}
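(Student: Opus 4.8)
The plan is to derive Corollary~\ref{cor:jordan} as a direct application of Theorem~\ref{thm:main}, with Lemma~\ref{lem:birman} supplying exactly the hypotheses Theorem~\ref{thm:main} requires. First I would pick a based simple closed curve $b\in\pi_1(S)<\Mod(S,p)$ whose geometric intersection number with $a$ is exactly one; such a $b$ exists because $a$ is nonseparating, and on a surface of genus at least two every nonseparating simple closed curve has a geometric ``dual'' curve meeting it once. Set $A=\rho(a)$ and $B=\rho(b)$. By Lemma~\ref{lem:birman}, the matrices $\rho(a^nb)$, $\rho(ab^n)$, $\rho(ba^n)$, and $\rho(b^na)$ are all quasi-unipotent and, in fact, all conjugate in $\GL_n(\C)$ to one another (and to $\rho(ab)$) for every $n$. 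Conjugate matrices have equal trace, and taking $k$-th powers preserves conjugacy, so $\tr((AB^n)^k)=\tr((AB)^k)$ is independent of $n$ for all $k\geq 0$; likewise $A=\rho(a)$ and $B=\rho(b)$ are themselves quasi-unipotent by the first assertion of Lemma~\ref{lem:birman}, since $a$ and $b$ are based simple closed curves. By hypothesis $B=\rho(a)$ wait --- rather, by hypothesis $\rho(a)$ has a single Jordan block, i.e.\ $A$ has a single Jordan block.

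To match the statement of Theorem~\ref{thm:main}, which asks that \emph{$B$} have a single Jordan block, I would simply interchange the roles of $a$ and $b$: apply Lemma~\ref{lem:birman} with the pair $(b,a)$ in place of $(a,b)$ (their geometric intersection number is symmetric and equal to one), so that the relevant hypothesis becomes that $\rho(a)$ has a single Jordan block. Thus, writing $A'=\rho(b)$ and $B'=\rho(a)$, we have: $A'$ and $B'$ are quasi-unipotent, $B'$ has a single Jordan block, and $\tr((A'(B')^n)^k)=\tr((\rho(ba^n))^k)=\tr((\rho(ba))^k)$ is independent of $n$ for all $k$ by the conjugacy assertion of Lemma~\ref{lem:birman}. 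Theorem~\ref{thm:main} then yields a common eigenvector $v\in\C^n$ for $\rho(a)$ and $\rho(b)$, and Corollary~\ref{cor:solvable} gives that $\langle\rho(a),\rho(b)\rangle$ is solvable.

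The final step is to conclude that $\rho$ cannot be faithful. Here I would invoke the structure of the point-pushing subgroup: $\pi_1(S)$ for $S$ of genus at least two is a surface group, hence non-solvable (it contains a nonabelian free subgroup), and moreover any two based simple closed curves $a,b$ with $i(a,b)=1$ generate a nonabelian free subgroup of $\pi_1(S)$ --- for instance, because a one-holed torus neighborhood of $a\cup b$ injects and its fundamental group is free of rank two. If $\rho$ were faithful, then $\rho(\langle a,b\rangle)\cong F_2$ would be a solvable group, a contradiction. Therefore $\rho$ is not faithful.

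The main obstacle, and the only place requiring real care, is the bookkeeping around which of the two matrices is required to have a single Jordan block and making sure the trace-invariance hypothesis of Theorem~\ref{thm:main} is literally satisfied --- that is, verifying that Lemma~\ref{lem:birman}'s conjugacy statement covers exactly the family $\{\rho(ba^n)\}_{n\geq 0}$ (or $\{\rho(ab^n)\}_{n\geq 0}$) that Theorem~\ref{thm:main} needs, rather than merely a related family. Everything else is a routine assembly of previously established results; in particular, the existence of a geometrically dual curve $b$ to a nonseparating curve $a$, and the freeness of $\langle a,b\rangle$ inside the surface group, are standard facts about surface topology that I would cite from~\cite{Farb}.
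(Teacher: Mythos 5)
Your proof is correct and is essentially the paper's intended argument: the paper states Corollary~\ref{cor:jordan} as an immediate consequence of Lemma~\ref{lem:birman} together with Theorem~\ref{thm:main} (via Corollary~\ref{cor:solvable}), which is exactly the assembly you carry out, including the correct handling of which matrix must be the single Jordan block by using the family $\rho(ba^n)$ and the contradiction with $\langle a,b\rangle\cong F_2$ inside the point--pushing subgroup.
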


\subsection{Representations sending a Dehn twist to a single Jordan block}

In this subsection,
suppose that $S$ has genus at least two, so that there exists a configuration of nonseparating simple closed curves $\{a,b,c\}$ such
that $a$ is disjoint from both $b$ and $c$ and where $b$ and $c$ have nonzero geometric intersection number. We write $\{T_a,T_b,T_c\}$
for the corresponding Dehn twists about these curves.

\begin{prop}\label{prop:mcg-easy}
Let $\rho\colon\Mod(S)\to\GL_n(\C)$ be a representation, and suppose that for some nonseparating simple closed curve $a\subset S$,
we have $\rho(T_a)$ consists of a single Jordan block. Then $\rho$ is not faithful.
\end{prop}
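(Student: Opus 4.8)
The plan is to reduce Proposition~\ref{prop:mcg-easy} to Corollary~\ref{cor:jordan}, which handles the point-pushing situation, by passing from the closed (or finite-type) surface $S$ to the surface $S' = S \setminus \{p\}$ with one marked point and exploiting the Birman exact sequence. First I would note that a Dehn twist $T_a$ about a nonseparating curve $a$ on $S$ lifts, under the forgetful map $\Mod(S',p) \to \Mod(S)$, to a Dehn twist about a nonseparating curve on $S'$; abusing notation, call this lifted twist $T_a$ as well. The hypothesis is that $\rho(T_a)$ has a single Jordan block. Composing $\rho$ with the forgetful map yields a representation $\tilde\rho = \rho \circ \pi$ of $\Mod(S',p)$, and $\tilde\rho(T_a)$ still has a single Jordan block. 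However $\tilde\rho$ is manifestly \emph{not} faithful because the point-pushing subgroup $\pi_1(S) < \Mod(S',p)$ lies in its kernel — so this composition cannot be fed directly into Corollary~\ref{cor:jordan}, and a more careful argument is needed.

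The correct approach is instead to build the relevant pair of matrices directly inside $\rho(\Mod(S))$ and apply Theorem~\ref{thm:main}. Using the configuration $\{a,b,c\}$ of nonseparating curves with $a$ disjoint from $b$ and $c$, and with $i(b,c) \neq 0$, I would set $A_1 = \rho(T_b)$ and $A_2 = \rho(T_c)$ — or, more to the point, I would use the chain relation / the existence of the annulus cobounded by two parallel copies of $a$: since $a$ is disjoint from both $b$ and $c$, the twist $T_a$ commutes with $T_b$ and $T_c$. The key step is to locate, among the conjugates and products of $T_a, T_b, T_c$, a quasi-unipotent element $B$ all of whose eigenvalues are governed by a single Jordan block together with an element $A$ such that all $AB^k$ are conjugate to $A$ (hence quasi-unipotent with $n$-independent trace powers). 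Concretely, because Dehn twists about disjoint curves commute and because $\Mod(S)$ acts transitively on nonseparating curves (so all their twists are conjugate), Lemma~\ref{lem:birman}-style reasoning gives that products like $T_a^k T_b$ are again twist-like, forcing the hypothesis of Theorem~\ref{thm:main} on the pair $(\rho(T_b), \rho(T_a))$. Theorem~\ref{thm:main} then produces a common eigenvector, forcing $\langle \rho(T_a), \rho(T_b)\rangle$ to be solvable.

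The final step is to derive non-faithfulness from solvability of this two-generated image. The subgroup $\langle T_a, T_b \rangle < \Mod(S)$ is a free group of rank two (since $a$ and $b$ can be chosen to fill a subsurface or, failing that, one uses that two twists about curves with intersection number at least one generate a free group by the ping-pong lemma of Ishida/Hamidi-Tehrani), whereas its image $\langle \rho(T_a), \rho(T_b)\rangle$ is solvable and therefore cannot contain a nonabelian free group; hence $\rho$ restricted to this subgroup, and a fortiori $\rho$ itself, is not injective.

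I expect the main obstacle to be the bookkeeping in the middle step: verifying that the pair of matrices one extracts genuinely satisfies the precise hypotheses of Theorem~\ref{thm:main} — in particular that the single-Jordan-block condition is on the right matrix and that the trace-invariance $\tr((AB^n)^k)$ independent of $n$ really follows from the conjugacy of the relevant Dehn twist products provided by the commuting relations and transitivity of the $\Mod(S)$-action on nonseparating curves. One must also be careful that the curves $a, b$ can be simultaneously chosen so that $T_a$ is disjoint from $T_b$ (giving commutation, needed to form $T_a^k T_b$ as a twist-like element) yet $\langle T_a, T_b\rangle$ is still nonabelian free — this requires using the auxiliary curve $c$, replacing the naive pair by $(T_b, T_c)$ with $T_a$ as the commuting ``engine,'' exactly as the setup of this subsection anticipates.
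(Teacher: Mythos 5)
Your final step (solvable image of $\langle T_b,T_c\rangle$ versus the nonabelian free subgroup coming from $i(b,c)\neq 0$) matches the paper's, but the middle of your argument has a genuine gap, and it is internally inconsistent. In your setup $a$ is disjoint from $b$, so $T_a$ and $T_b$ commute and $\langle T_a,T_b\rangle$ is abelian; your claim that it is ``a free group of rank two'' is false, and solvability of $\langle\rho(T_a),\rho(T_b)\rangle$ (or of $\langle\rho(T_a),\rho(T_c)\rangle$) yields no contradiction. You notice this and propose to ``replace the naive pair by $(T_b,T_c)$,'' but Theorem~\ref{thm:main} cannot be applied to that pair: neither $\rho(T_b)$ nor $\rho(T_c)$ is assumed to have a single Jordan block, $T_b$ and $T_c$ do not commute, and $T_b^nT_c$ is not a multitwist, so there is no reason for the quasi-unipotence/trace-invariance hypotheses to hold. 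What you would actually need is that $\rho(T_b)$ and $\rho(T_c)$ \emph{simultaneously} preserve the (unique) full flag attached to the single Jordan block $\rho(T_a)$; applying Theorem~\ref{thm:main} separately to the pairs $(\rho(T_b),\rho(T_a))$ and $(\rho(T_c),\rho(T_a))$ only gives a common eigenvector for each pair, and upgrading this to solvability of $\langle\rho(T_b),\rho(T_c)\rangle$ requires an inductive flag argument that you have not supplied. (Your justification of the trace hypothesis is also shaky: the products $T_a^kT_b$ are multitwists, not conjugates of a single twist; the correct reason trace-invariance holds for the commuting pair is that $\tr\bigl((\rho(T_b)\rho(T_a)^n)^k\bigr)$ is a polynomial in $n$ that is bounded, being a sum of roots of unity.)

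The paper's proof bypasses all of this with an elementary observation: since $\rho(T_a)$ consists of a single Jordan block, any matrix commuting with it is upper triangular in the Jordan basis of $\rho(T_a)$, so the entire centralizer of $\rho(T_a)$ is solvable. Because $a$ is disjoint from both $b$ and $c$, both $\rho(T_b)$ and $\rho(T_c)$ lie in this centralizer, hence $\langle\rho(T_b),\rho(T_c)\rangle$ is solvable, while $\langle T_b,T_c\rangle$ contains a nonabelian free group; this is exactly the contradiction you aim for, but obtained without Theorem~\ref{thm:main} and without needing quasi-unipotence of twist images (so it also works in genus two, where the lantern-relation argument behind Proposition~\ref{thm:dt-intro} is unavailable). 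If you want to salvage your route, replace the appeal to Theorem~\ref{thm:main} by this centralizer fact, or carry out the flag induction explicitly; as written, the step producing solvability of the group that actually contains a free subgroup is missing.
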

The reader may compare this fact with Corollary~\ref{cor:mcg-intro2}.

\begin{proof}[Proof of Proposition~\ref{prop:mcg-easy}]
Let $\{a,b,c\}$ be nonseparating simple closed curves such
that $a$ is disjoint from both $b$ and $c$ and where $b$ and $c$ have nonzero geometric intersection number. Then $T_a$ commutes
with both $T_b$ and $T_c$. Writing $V=\C^n$, there is a sequence of vector spaces
\[V=V_n\subset V_{n-1}\subset\cdots\subset V_1\subset  \{0\}\] such that $V_i$ has dimension exactly $i$ and such that
$\rho(T_a)(V_i)=V_i$. That is, $\rho(T_a)$ preserves a maximal flag $\mathcal{F}$ in $V$. This flag gives rise to a basis with respect to which
$\rho(T_a)$ is in Jordan Canonical Form.

A straightforward calculation shows that if $M\in\GL_n(\C)$ commutes with $\rho(T_a)$ then $M$ is upper triangular with respect to the
same basis
determined by $\mathcal{F}$. Thus, the centralizer of $\rho(T_a)$ is solvable, whence $\rho(T_b)$ and $\rho(T_c)$ generate a solvable
subgroup of $\GL_n(\C)$. Since $b$ and $c$ have nonzero geometric intersection number, the group $\langle T_b,T_c\rangle<\Mod(S)$
contains a nonabelian free group. Thus, we conclude that $\rho$ is not injective.
\end{proof}

\subsection{Irreducibility of representations}

In this final subsection, we prove the following
general fact about the representation theory of mapping class groups. Strictly speaking, it is not necessary
for the discussion in this paper, though we record it for its independent interest and because its flavor is similar to the questions addressed
in this article.

\begin{prop}\label{prop:irred}
Let $S$ be a non--sporadic surface of hyperbolic type, and suppose that $Z(\Mod(S))$ is trivial. If $\Mod(S)$ admits a faithful finite--dimensional
representation, then it admits
a faithful irreducible representation.
\end{prop}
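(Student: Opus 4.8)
The plan is to build a faithful irreducible representation out of a faithful (possibly reducible) one by an inductive argument on dimension, exploiting the hypothesis that $Z(\Mod(S))$ is trivial together with the fact that non--sporadic mapping class groups have no nontrivial finite normal subgroups. Suppose $\rho\colon\Mod(S)\to\GL(V)$ is faithful and of minimal dimension among all faithful representations. If $\rho$ is already irreducible we are done, so assume $V$ contains a proper nonzero invariant subspace $W$, giving a short exact sequence of $\Mod(S)$--modules $0\to W\to V\to V/W\to 0$. Let $\rho_1$ and $\rho_2$ be the subrepresentations/quotient representations on $W$ and $V/W$. By minimality, neither $\rho_1$ nor $\rho_2$ is faithful, so $K_1=\ker\rho_1$ and $K_2=\ker\rho_2$ are nontrivial normal subgroups; note $K_1\cap K_2=\{1\}$ since $\rho$ is faithful.

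The key structural input is that $\Mod(S)$, for $S$ non--sporadic of hyperbolic type, is known to have very restricted normal subgroups: every nontrivial normal subgroup either contains a pure mapping class group / a power of a Dehn twist and is ``large,'' or is central. Since $Z(\Mod(S))$ is trivial by hypothesis, any nontrivial normal subgroup $K$ must be infinite and in fact must intersect every other nontrivial normal subgroup nontrivially --- this is the crucial claim to nail down. Concretely, I would argue that two commuting normal subgroups $K_1, K_2$ with $[K_1,K_2]=1$ (which holds automatically when $K_1\cap K_2=1$) would force $K_1$ to centralize $K_2$; but a nontrivial normal subgroup of $\Mod(S)$ has trivial centralizer in $\Mod(S)$ (again because the center is trivial and normal subgroups are ``filling''), so $K_1=\{1\}$, a contradiction. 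Therefore no proper invariant subspace $W$ can exist, and $\rho$ is irreducible.

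The main obstacle --- and the step requiring the most care --- is the assertion that a nontrivial normal subgroup of a non--sporadic $\Mod(S)$ with trivial center has trivial centralizer. This is where one must invoke the structure theory of normal subgroups of mapping class groups: by a theorem in the spirit of work on normal subgroups (e.g.\ that any nontrivial normal subgroup contains a ``generic'' pseudo-Anosov or acts with full support), such a subgroup is not contained in any proper ``reducible'' piece, so its centralizer is contained in the center. One must check the non--sporadic hypothesis is exactly what licenses this (sporadic surfaces --- the torus with $\le 1$ puncture, the sphere with $\le 4$ punctures --- are excluded precisely because their mapping class groups are virtually free or finite and have exceptional normal subgroup behavior). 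A secondary subtlety is ensuring the induction bottoms out correctly: the minimal faithful dimension is finite by hypothesis, and dimension $1$ representations factor through the abelianization, which is finite (genus $\ge 3$) or cyclic, hence cannot be faithful, so the base case is vacuous and the argument is driven entirely by the normal-subgroup dichotomy.
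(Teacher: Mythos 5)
There is a genuine gap, and it sits at the single sentence ``note $K_1\cap K_2=\{1\}$ since $\rho$ is faithful.'' This is false: faithfulness of $\rho$ does not kill $K=K_1\cap K_2$. An element $k\in K$ acts trivially on $W$ and on $V/W$, but in a basis adapted to the flag $W\subset V$ its image is a block matrix $\left(\begin{smallmatrix} I & * \\ 0 & I\end{smallmatrix}\right)$, which can perfectly well be a nontrivial element of $\GL(V)$. So faithfulness of $\rho$ only embeds $K$ into this group of block--unipotent matrices; it does not make $K$ trivial. Since your entire contradiction is driven by $[K_1,K_2]\subset K_1\cap K_2=\{1\}$ and the centralizer argument, the proof collapses exactly in the case where $K$ is nontrivial, and that case cannot be wished away --- in fact the paper's own argument shows that (given minimality) this is the case that actually occurs.

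The paper's proof splits into the two cases you conflated. If $K=\{1\}$, then indeed $K_1K_2\cong K_1\times K_2$, and one gets a contradiction much as you describe: by Lemma~\ref{lem:normal} each nontrivial $K_i$ contains a pseudo-Anosov, whose centralizer is virtually cyclic, yet it would centralize an infinite subgroup of the other $K_j$, producing a $\Z\times\Z$ in that centralizer; hence one of $K_1,K_2$ is trivial, contradicting minimality of $\dim V$. (Your ``trivial centralizer of a normal subgroup'' claim is a heavier variant of this and would need its own proof, but it is the same circle of ideas.) The missing half is: if $K\neq\{1\}$, then $\rho|_K$ is faithful and $\rho(K)$ consists of the block--unipotent matrices above, hence is a nilpotent (indeed abelian) group; but $K$ is a nontrivial normal subgroup, non-central since $Z(\Mod(S))=1$, so by Lemma~\ref{lem:normal} it contains a nonabelian free group --- contradiction. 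Without this unipotence/nilpotence step (the Lie--Kolchin flavor that motivates the whole paper), your argument does not rule out a proper invariant subspace.
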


The hypothesis that $Z(\Mod(S))$ is trivial is satisfied by most mapping class groups. For closed surfaces of genus two,
the hyperelliptic involution is central, though this is in some sense the only example of a mapping class group with nontrivial center. Here,
we say that $S$ is \emph{non--sporadic} if $S$ admits two disjoint, non--isotopic, essential, nonperipheral, simple closed curves.

We will require the following fact, which is standard (see~\cite{McPap,DGO,Ivanov1992}).

\begin{lem}\label{lem:normal}
Let $1\neq N<\Mod(S)$ be a non--central normal subgroup of a non--sporadic mapping class group.
Then $N$ contains a pair of independent pseudo-Anosov mapping classes, and hence a nonabelian free subgroup.
\end{lem}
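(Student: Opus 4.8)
\section*{Proof proposal for Lemma~\ref{lem:normal}}

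The plan is to use the structure theory of subgroups of mapping class groups, due to Ivanov and McCarthy, to first locate a single pseudo-Anosov element inside $N$, then to manufacture a second, independent pseudo-Anosov by conjugating within the normal subgroup $N$, and finally to run a ping-pong argument to extract a nonabelian free subgroup. Throughout, \emph{independent} pseudo-Anosovs means their attracting/repelling fixed pairs in the space $\mathcal{PMF}(S)$ of projective measured foliations are disjoint.

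The first step is to show that $N$ contains a pseudo-Anosov at all. I would invoke Ivanov's trichotomy: every subgroup of $\Mod(S)$ is either finite, or infinite and reducible (preserving a nonempty finite collection $\sigma(H)$ of isotopy classes of disjoint essential simple closed curves, its canonical reduction system), or else contains a pseudo-Anosov. If $N$ were finite, then for any $1\neq f\in N$ normality forces the whole conjugacy class of $f$ into $N$, so $f$ has finitely many conjugates and $C_{\Mod(S)}(f)$ has finite index; but the centralizer of a nontrivial finite-order mapping class of a non-sporadic surface is commensurable with the mapping class group of a quotient orbifold of strictly smaller complexity, hence of infinite index, unless $f$ is central. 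So a finite normal subgroup would be central, contrary to hypothesis. If $N$ were infinite reducible, then $\sigma(N)$ is a nonempty, finite, canonically defined set of isotopy classes, so $g N g^{-1}=N$ for all $g\in\Mod(S)$ gives $g\cdot\sigma(N)=\sigma(N)$; thus $\sigma(N)$ is a nonempty finite $\Mod(S)$-invariant multicurve. This is impossible on a non-sporadic surface, since applying powers of a suitable Dehn twist $T_\beta$ to any curve in $\sigma(N)$ crossed by $\beta$ produces infinitely many distinct isotopy classes, all of which would have to lie in $\sigma(N)$. Hence $N$ contains a pseudo-Anosov $f$.

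Next I would produce an independent pseudo-Anosov in $N$. Let $P_f=\{[\mathcal{F}^+_f],[\mathcal{F}^-_f]\}\subset\mathcal{PMF}(S)$ be the (uniquely ergodic) fixed pair of $f$. By a standard fact (McCarthy, Ivanov) the stabilizer of $P_f$ in $\Mod(S)$ is virtually cyclic, commensurable with $\langle f\rangle$, so the $\Mod(S)$-orbit of $P_f$ is infinite; in particular there exists $g\in\Mod(S)$ with $g\cdot P_f\cap P_f=\emptyset$. Then $gfg^{-1}$ is a pseudo-Anosov with fixed pair $g\cdot P_f$ disjoint from $P_f$, hence independent of $f$, and $gfg^{-1}\in N$ by normality. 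A standard ping-pong argument on $\mathcal{PMF}(S)$ (equivalently on the Gromov boundary of the curve complex $\mathcal{C}(S)$) then shows that for all sufficiently large $m$ the elements $f^m$ and $(gfg^{-1})^m$ generate a free group of rank two; this free group sits inside $N$, finishing the proof.

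The main obstacle is the first step: guaranteeing a single pseudo-Anosov in $N$, since this is where non-sporadicity and normality are both essential, and where Ivanov's subgroup classification together with the computation of centralizers of finite-order elements carries the weight; after that, passing to an independent pair and applying ping-pong is routine. I would present the Ivanov-style argument as the main line. As a remark, there is a slicker route: a non-sporadic $\Mod(S)$ acts acylindrically on $\mathcal{C}(S)$ and is therefore acylindrically hyperbolic with maximal finite normal subgroup equal to its (possibly trivial) center, so the Dahmani--Guirardel--Osin structure theory for acylindrically hyperbolic groups gives directly that any normal subgroup not contained in the center contains two independent loxodromic elements, i.e.\ pseudo-Anosovs, generating a free group. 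Either way the cited references~\cite{McPap,DGO,Ivanov1992} supply the ingredients.
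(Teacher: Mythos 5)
The paper does not prove Lemma~\ref{lem:normal}; it records it as a standard fact and cites exactly the sources (Ivanov, McCarthy--Papadopoulos, Dahmani--Guirardel--Osin) whose arguments you reproduce. Your sketch is correct and follows that standard route: Ivanov's trichotomy rules out the finite case (a finite non-central normal subgroup is impossible) and the infinite reducible case (no nonempty finite $\Mod(S)$--invariant multicurve exists on a non--sporadic surface), then the virtually cyclic stabilizer of a pseudo-Anosov's fixed pair together with normality yields an independent conjugate in $N$, and ping-pong on $\mathcal{PMF}(S)$ gives the nonabelian free subgroup.
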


Here, we say  that two pseudo-Anosov mapping classes are \emph{independent} if they do not generate a virtually cyclic group.

\begin{proof}[Proof of Proposition~\ref{prop:irred}]
Let $\rho\colon\Mod(S)\to\GL(V)$ be a faithful representation of minimal dimension. We claim that $\rho$ is irreducible. Suppose for a
contradiction that
$0\neq W\subsetneq V$ is a proper $\rho$--invariant subspace.
One obtains two non--injective representations of $\Mod(S)$, namely
$\rho_W\colon \Mod(S)\to\GL(W)$ and $\rho_{V/W}\colon\Mod(S)\to \GL(V/W)$. Write $K_1$ and $K_2$ for the kernels of these
two representations.

We first claim that $K=K_1\cap K_2$ is nontrivial. Indeed, otherwise the product representation $\rho_W\times \rho_{V/W}$ would be injective,
and the subgroup $K_1K_2$ would be isomorphic to $K_1\times K_2$. By Lemma~\ref{lem:normal}, both $K_1$ and $K_2$ contain
pseudo-Anosov mapping classes, whose centralizers are virtually cyclic. Then if $K_1$ and $K_2$ are both nontrivial, we obtain that the
centralizer of each infinite element $\psi\in K_i$ contains a copy of $\Z\times\Z$,
which is a contradiction. It follows that either $K_1$ or $K_2$ is trivial,
violating the minimality of the dimension of $V$.

We may thus conclude that $K$ is nontrivial, and that $\rho$ restricts to a faithful representation of $K$. If $1\neq k\in K$
then we immediately have
that $v-\rho(k)v\in W$ for all $v\in V$, since $k\in K_2$. Moreover, if $w\in W$ then $\rho(k)w=w$,
since $k\in K_1$. It follows that there is a basis for $V$ for which
$\rho(K)$ acts by unipotent matrices, so that $\rho(K)$ is nilpotent. This contradicts Lemma~\ref{lem:normal}.
\end{proof}

\section{Trace calculations}\label{sec:main}

In this section, let $A,B\in\GL_{m+1}(\C)$
be quasi-unipotent matrices such that $B$ has a single Jordan block, and suppose that the value of $\tr((AB^n)^k)$ is
independent of the value of $n$ for all $k$. The choice of $\GL_{m+1}(\C)$ instead of $\GL_m(\C)$ is for notational convenience later on.

\subsection{An expansion of $(AB^n)^k$}
Since $B$ is quasi-unipotent, we may replace $B$ by a positive power which is a unipotent matrix.
We will abuse notation and call this power $B$ as well. In this case, we conjugate so that we
 may write $B=I+N$, where $N$ is a nilpotent matrix.

We may therefore write \[AB^n=\sum_{i=0}^n \binom{n}{i}AN^i,\] and
\[(AB^n)^k=\sum_{j=0}^{kn}\sum_{i_1+\cdots i_{k}=j, i_s \geq 0}\binom{n}{i_1}\cdots\binom{n}{i_{k}}AN^{i_1}\cdots AN^{i_{k}},\] and the assumptions
on $A$ and $B$ imply that the trace of this latter expression is independent of $n$.

Since $N^{m+1}=0$, we may write the limits of the sum as quantities which are independent of $n$. That is, we have
\[(AB^n)^k=\sum_{j=0}^{km}\sum_{i_1+\cdots i_{k}=j}\binom{n}{i_1}\cdots\binom{n}{i_{k}}AN^{i_1}\cdots AN^{i_{k}}.\]

\subsection{The index of a matrix}
Let $A=(A_{i,j})\in M_{m+1}(\C)$ be a square matrix.

\begin{defn}
The \emph{index} of the matrix $A$ is the unique integer $k\in\Z$ characterized by the following two conditions:
\begin{enumerate}
\item
If $i>j+k$ then $A_{i,j}=0$.
\item
There exists an $i$ such that $A_{i,i-k}\neq 0$.
\end{enumerate}
The index of $A$ will be written $\ind(A)$.
\end{defn}

It is a straightforward consequence of the definition that if $\ind(A)=k$ and if $A_{i,j}\neq 0$ then $j\geq i-k$.

The following lemma is straightforward, and we omit a proof.

\begin{lem}\label{lem:index-basic}
Let $A=(A_{i,j})\in M_{m+1}(\C)$. The following conclusions hold:
\begin{enumerate}
\item
If $A_{i,j}=0$ whenever $i>j+k$ then $\ind(A)\leq k$.
\item
We have $-m-1\leq \ind(A)\leq m$.
\item
The matrix $A$ is upper triangular if and only if $\ind(A)\leq 0$.
\item
If $\ind(A)<0$ then $\tr(A)=0$.
\end{enumerate}
\end{lem}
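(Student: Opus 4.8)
The plan is to read each of the four conclusions off the two defining conditions of $\ind(A)$ directly; essentially no computation is involved. Before that I would record two preliminary observations. First, for the zero matrix condition (2) cannot be met, so one fixes the convention $\ind(0)=-m-1$ (matching the lower bound in part (2)); in the arguments below $A$ is understood to be nonzero, the zero case being absorbed by this convention. Second, unwinding conditions (1) and (2) gives the reformulation $\ind(A)=\max\{\, i-j : A_{i,j}\neq 0\,\}$, the maximum being over the nonempty set of positions of nonzero entries: condition (1) says precisely that $\ind(A)$ is an upper bound for all such $i-j$, and condition (2) says this bound is attained.

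Given this, each part is one line. For (1): if $A_{i,j}=0$ whenever $i>j+k$, then every nonzero entry satisfies $i-j\leq k$, so the maximum $\ind(A)$ is $\leq k$. For (2): every nonzero entry has $1\leq i,j\leq m+1$, hence $-m\leq i-j\leq m$, so $-m\leq \ind(A)\leq m$, and together with the convention for the zero matrix this yields $-m-1\leq \ind(A)\leq m$. For (3): if $A$ is upper triangular then $A_{i,j}=0$ for all $i>j=j+0$, so part (1) applied with $k=0$ gives $\ind(A)\leq 0$; conversely, if $\ind(A)\leq 0$ then condition (1) with $k=\ind(A)$, together with the trivial inequality $j+\ind(A)\leq j$, shows $A_{i,j}=0$ whenever $i>j$, i.e. $A$ is upper triangular. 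For (4): if $\ind(A)<0$, then taking $i=j$ in condition (1) — the inequality $i>i+\ind(A)$ holds since $\ind(A)<0$ — gives $A_{i,i}=0$ for every $i$, so $\tr(A)=\sum_i A_{i,i}=0$.

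There is no real obstacle here; the only point requiring a word of care is the bookkeeping around the zero matrix and the off-by-one between the index set $\{1,\dots,m+1\}$ and the stated lower bound $-m-1$, which is exactly why it is natural to fix the convention $\ind(0)=-m-1$ at the outset. This is presumably why the authors were comfortable omitting the proof.
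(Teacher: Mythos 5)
Your proof is correct, and in fact the paper omits the proof of this lemma entirely (it is declared ``straightforward''); your reformulation $\ind(A)=\max\{\,i-j : A_{i,j}\neq 0\,\}$ for $A\neq 0$, with a convention for the zero matrix, is exactly the intended direct verification. The only cosmetic remark is that for the degenerate zero-matrix case a convention like $\ind(0)=-\infty$ is slightly more robust than $\ind(0)=-m-1$ (which would clash with part (1) for very negative $k$), but this plays no role in how the lemma is used in the paper.
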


The following is a fundamental property of the index:

\begin{lem}\label{lem:submult}
The index is submultiplicative. That is, for square matrices $A$ and $B$ of the same dimension, we have \[\ind(AB)\leq \ind(A)+\ind(B).\]
\end{lem}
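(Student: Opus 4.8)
The plan is to prove submultiplicativity of the index directly from the definition by tracking which entries of the product $AB$ are forced to vanish. Write $a = \ind(A)$ and $b = \ind(B)$. By the characterizing property noted just after the definition, $A_{i,k} \neq 0$ implies $k \geq i - a$, and $B_{k,j} \neq 0$ implies $j \geq k - b$. The entry of the product is $(AB)_{i,j} = \sum_k A_{i,k} B_{k,j}$, so if a term in this sum is nonzero we must have both $k \geq i - a$ and $j \geq k - b$, hence $j \geq k - b \geq (i - a) - b = i - (a+b)$. Therefore whenever $i > j + (a+b)$, every term vanishes and $(AB)_{i,j} = 0$.

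Having shown that $(AB)_{i,j} = 0$ whenever $i > j + (a+b)$, I would then invoke part (1) of Lemma~\ref{lem:index-basic} to conclude immediately that $\ind(AB) \leq a + b = \ind(A) + \ind(B)$. That finishes the argument.

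There is no real obstacle here; the only point requiring a moment's care is the logical structure of the estimate — namely that a single nonzero summand $A_{i,k}B_{k,j}$ already forces the chain of inequalities through the (possibly varying) intermediate index $k$, and that the bound $j \geq i - (a+b)$ obtained is independent of $k$. Once that is observed, the conclusion is purely formal via Lemma~\ref{lem:index-basic}(1). I would present it as follows.

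\begin{proof}
Let $a = \ind(A)$ and $b = \ind(B)$. Recall from the remark following the definition that $A_{i,k} \neq 0$ implies $k \geq i - a$, and $B_{k,j} \neq 0$ implies $j \geq k - b$. Now fix indices $i, j$ with $i > j + a + b$, and consider the entry
\[
(AB)_{i,j} = \sum_{k} A_{i,k} B_{k,j}.
\]
If some term $A_{i,k} B_{k,j}$ were nonzero, then both $A_{i,k} \neq 0$ and $B_{k,j} \neq 0$, so $k \geq i - a$ and $j \geq k - b$, whence
\[
j \geq k - b \geq (i - a) - b = i - (a + b) > j,
\]
a contradiction. Hence every term vanishes and $(AB)_{i,j} = 0$. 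Since this holds for all $i > j + a + b$, part (1) of Lemma~\ref{lem:index-basic} gives $\ind(AB) \leq a + b = \ind(A) + \ind(B)$.
\end{proof}
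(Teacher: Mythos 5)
Your proof is correct and follows essentially the same route as the paper: bound the indices via the remark after the definition ($A_{i,k}\neq 0$ forces $k\geq i-\ind(A)$, etc.), derive the contradiction $j\geq i-(\ind(A)+\ind(B))>j$ for a hypothetical nonzero term, and conclude with Lemma~\ref{lem:index-basic}(1). No gaps.
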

\begin{proof}
We write $\ind(A)=k$ and $\ind(B)=\ell$. By Lemma~\ref{lem:index-basic}, it suffices to show that if $i>j+k+\ell$ then $(AB)_{i,j}=0$.

Suppose the contrary. Then for some such choice of indices $i$ and $j$ such that $i>j+k+\ell$,
we have \[0\neq (AB)_{i,j}=\sum_s A_{i,s}B_{s,j}.\]
We choose an index $s$ such that $A_{i,s}B_{s,j}\neq 0$. Since the index of
$A$ is equal to $k$, we see that $s\geq i-k$, and similarly $j\geq s-\ell$. Combining these equalities, we see that
\[j\geq s-\ell\geq i-k-\ell,\] which violates the requirement that $i>j+k+\ell$.
\end{proof}

The following properties of the index now follow:

\begin{cor}\label{cor:prod-index}
Let $\{A_1,\ldots,A_s\}\subset M_{m+1}(\C)$, and write $\ind(A_i)=n_i$. Let \[K=\sum_{i=1}^s n_i\] and let $A=A_1\cdots A_s$.
\begin{enumerate}
\item
If $K\leq 0$ then $A$ is upper triangular.
\item
If $K<0$ then $\tr(A)=0$.
\item
If $K=0$ then \[\tr(A)=\sum_{k=1}^{m+1}(A_1)_{k,k-n_1}(A_2)_{k-n_1,k-n_1-n_2}\cdots (A_s)_{k-n_1-\cdots-n_{s-1},k}.\]
\end{enumerate}
\end{cor}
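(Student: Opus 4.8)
The plan is to derive all three items from the submultiplicativity of the index (Lemma~\ref{lem:submult}), which via induction gives $\ind(A_1\cdots A_s)\leq n_1+\cdots+n_s=K$, together with the basic facts recorded in Lemma~\ref{lem:index-basic}. First I would note that repeated application of Lemma~\ref{lem:submult} yields $\ind(A)\leq K$. Item (1) is then immediate: if $K\leq 0$ then $\ind(A)\leq 0$, so $A$ is upper triangular by Lemma~\ref{lem:index-basic}(3). Item (2) is equally quick: if $K<0$ then $\ind(A)\leq K<0$, so $\tr(A)=0$ by Lemma~\ref{lem:index-basic}(4).

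The substance of the corollary is item (3), the exact trace formula when $K=0$. The approach is to expand $\tr(A)=\tr(A_1\cdots A_s)$ directly as
\[\tr(A)=\sum_{k=1}^{m+1}\ \sum_{j_1,\ldots,j_{s-1}}(A_1)_{k,j_1}(A_2)_{j_1,j_2}\cdots(A_s)_{j_{s-1},k},\]
and then argue that the only surviving summand is the one with $j_1=k-n_1$, $j_2=k-n_1-n_2$, and so on. For this I would invoke the observation stated just after the definition of index: if $(A_t)_{i,j}\neq 0$ then $j\geq i-n_t$. Applying this along a nonzero term of the expansion gives the chain of inequalities $j_1\geq k-n_1$, $j_2\geq j_1-n_2\geq k-n_1-n_2$, \ldots, and finally (closing the cycle back to row $k$) $k\geq j_{s-1}-n_s\geq k-(n_1+\cdots+n_s)=k-K=k$. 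Since the two ends of this chain coincide when $K=0$, every inequality in it must be an equality, forcing $j_t=k-n_1-\cdots-n_t$ for each $t$. Substituting these forced indices back into the expansion produces exactly the claimed formula.

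The one point requiring a little care is the bookkeeping of the index subscripts: the last factor $(A_s)_{j_{s-1},k}$ must be handled so that the cyclic return to column $k$ is correctly incorporated into the telescoping inequality, and one should check that the forced value $j_{s-1}=k-n_1-\cdots-n_{s-1}$ is consistent with the constraint coming from $A_s$, namely $k\geq j_{s-1}-n_s$, which holds with equality precisely because $\sum n_t=0$. I expect this index-chasing to be the main (mild) obstacle; everything else is a direct consequence of Lemmas~\ref{lem:index-basic} and~\ref{lem:submult}. No deeper idea is needed: the corollary is essentially a clean restatement of what submultiplicativity of the index buys us, together with the rigidity that occurs when the bound $K$ is met exactly.
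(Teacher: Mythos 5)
Your proof is correct and follows essentially the same route as the paper: items (1) and (2) via induction on Lemma~\ref{lem:submult}, and item (3) by expanding the trace over index chains, using that a nonzero entry of $A_t$ forces $j\geq i-n_t$, and observing that when $K=0$ the cyclic chain of inequalities closes up and forces equality at each step. Your write-up actually makes the telescoping/equality step more explicit than the paper's, but there is no difference in substance.
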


By convention, $(A_i)_{j,k}=0$ if one of $j$ and $k$ is nonpositive.

\begin{proof}[Proof of Corollary~\ref{cor:prod-index}]
We establish the conclusions in order. The first and second conclusions follow immediately from Lemma~\ref{lem:submult}, by induction on $s$.

For the third conclusion, we note the general formula \[\tr(A)=\sum_{i_1,\ldots,i_s} (A_1)_{i_1,i_2}(A_2)_{i_2,i_3}\cdots(A_s)_{i_s,i_1},\] where
the indices $\{i_1,\ldots,i_s\}$ range between $1$ and $m+1$. Now, if $(A_k)_{i_k,i_{k+1}}\neq 0$ then $i_{k+1}\geq i_k-n_k$, as follows
from the definition of the index. Here, we adopt the convention that $i_{s+1}=i_1$. The only terms in the expression
for $\tr(A)$ which are nonzero are ones for which $i_{k+1}\geq i_k-n_k$ for each index $k$. From the fact that $K=0$,
the conclusion of the lemma follows.
\end{proof}

\subsection{A combinatorial reformulation of Theorem~\ref{thm:main}}
Let $A\in M_{m+1}(\C)$ have index $r$. We will write $x_i=A_{i,i-r}$ for $r+1\leq i\leq m+1$.
Our goal is to show that $x_i=0$ using the conditions that $\tr((AB^n)^k)$ are independent of $n$, for a fixed $k$.  By applying this argument inductively on the index of $A$, it shows that $A$ is upper-triangular. Therefore the group generated by $A,B$ is solvable.
We write $N$ for the nilpotent matrix obtained from
a single $(m+1)\times (m+1)$ Jordan block, with $N^{m+1}=0$ holds. That is, if $B\in M_{m+1}(\C)$ is a matrix with ones down the diagonal and the upper off-diagonal,
then $N=B-I$.

Note that by Lemma~\ref{lem:submult}, we have \[\ind(AN^{i_1}\cdots AN^{i_{k}})\leq r\cdot k-\sum_{\ell=1}^{k} i_{\ell}.\]

We consider the trace of the expansion of $(AB^n)^k$ in terms of $N$, which is to say
\[\sum_{j=0}^{km}\sum_{i_1+\cdots i_{k}=j}\binom{n}{i_1}\cdots\binom{n}{i_{k}}\tr(AN^{i_1}\cdots AN^{i_{k}}).\] Here,
the $i_k$'s are non-negative integers.

By Lemma \ref{lem:index-basic}, we have that if \[j=\sum_{\ell=1}^k i_{\ell}>r\cdot k\] then the corresponding summand contributes zero.
Thus, the largest value of $j$ for which the trace of $AN^{i_1}\cdots AN^{i_{k}}$ is
nonzero is when $j=r\cdot k$, and we will focus on this term. Thus, viewing this sum
of traces as a function of $n$, the first powers of $n$ with nonzero coefficient come from terms in which \[\sum_{\ell=1}^k i_{\ell}=r\cdot k.\] It is easy
to see that we obtain a polynomial function of $n$, that this polynomial has degree at most $r\cdot k$, and that the highest degree terms
come from the summands for which \[\sum_{\ell=1}^k i_{\ell}=r\cdot k.\]

We can also consider the product of binomial coefficients
\[\binom{n}{i_1}\cdots\binom{n}{i_{k}}\] as a function of $n$. This function is again a polynomial,
and the coefficient of the highest degree term in $n$ is given by \[\frac{1}{i_1!\cdots i_k!}.\]

Now, if we consider the $s^{th}$ power $N^s$ of $N$, we have that $(N^s)_{i,i+s}=1$ and $(N^s)_{i,t}=0$ otherwise. Applying Corollary
~\ref{cor:prod-index}, we can easily compute
\[\tr(AN^{i_1}\cdots AN^{i_{k}})=\sum_{s=1}^{m+1} x_sx_{s-r+i_1}\cdots x_{s-(k-1)r+ i_1+\cdots+ i_{k-1}}.\]
Here, we adopt the convention that if $i\leq r$ or $i\geq m+2$, then $x_i=0$.

Thus, the contribution of the traces for which \[\sum_{\ell=1}^k i_{\ell}=r\cdot k\] is a polynomial of $n$, whose highest degree term is given
by \[\sum_{i_1+\cdots+i_k=r\cdot k}\left(\sum_{s=1}^{m+1}\frac{1}{i_1!\cdots i_k!}x_sx_{s-r+i+1}\cdots x_{s-(k-1)r+i_1+\cdots+i_{k-1}}\right).\]

Thus, if
\[\sum_{j=0}^{km}\sum_{i_1+\cdots i_{k}=j}\binom{n}{i_1}\cdots\binom{n}{i_{k}}\tr(AN^{i_1}\cdots AN^{i_{k}}),\] independently of $n$, then
we obtain
 \[\sum_{i_1+\cdots+i_k=r\cdot k}\left(\sum_{s=1}^{m+1}\frac{1}{i_1!\cdots i_k!}x_sx_{s-r+i_1}\cdots x_{s-(k-1)r+i_1+\cdots+i_{k-1}}\right)=0,\]
 independently of $n$.

 To simplify these expressions a little, we substitute $j_s=i_s-r$. This way, we obtain the following  lemma.

 \begin{lem}\label{lem:comb}
 Theorem~\ref{thm:main} is equivalent to the statement that if
 \[\sum_{j_1+\cdots+j_k=0,\, j_t\geq -r}\left(\sum_{s=1}^{m+1}\frac{1}{(j_1+r)!\cdots(j_k+r)!}x_sx_{s+j_1}\cdots x_{s+j_1+\cdots+j_{k-1}}\right)=0\]
 for all positive integers $k$, then $x_1=\cdots=x_{m+1}=0$.
 \end{lem}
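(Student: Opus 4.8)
The plan is to prove the asserted equivalence as two implications. The implication ``combinatorial statement $\Rightarrow$ Theorem~\ref{thm:main}'' carries the mathematical content and is little more than the assembly of the computations already made in this section; the implication ``Theorem~\ref{thm:main} $\Rightarrow$ combinatorial statement'' is a formal reversal of the same reduction.

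\textbf{From the combinatorial statement to Theorem~\ref{thm:main}.} First I would pass to the normal form used above. Let $A,B\in\GL(V)$ satisfy the hypotheses of Theorem~\ref{thm:main}. Replacing $B$ by a unipotent power $B^p$ is harmless: for each $k$ the map $n\mapsto\tr((A(B^p)^n)^k)=\tr((AB^{pn})^k)$ is the restriction of an $n$-independent map, hence is itself $n$-independent; $B^p$ still has a single Jordan block; and $B$ and $B^p$ share the same one-dimensional eigenspace, so a common eigenvector for $(A,B^p)$ is one for $(A,B)$. Conjugating, I may therefore assume $B=I+N$ with $N$ the standard $(m+1)\times(m+1)$ nilpotent Jordan block, and it suffices to produce a common eigenvector for $A$ and $B$. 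Set $r=\ind(A)$. If $r\le 0$ then $A$ is upper triangular by Lemma~\ref{lem:index-basic}(3), so $e_1$ is a common eigenvector and we are done; so assume $r\ge 1$. Next I would invoke the expansion of $(AB^n)^k$ together with Lemma~\ref{lem:submult} and Corollary~\ref{cor:prod-index}, exactly as in the paragraphs preceding the lemma: the map $n\mapsto\tr((AB^n)^k)$ is a polynomial of degree at most $rk$; the terms with $\sum_\ell i_\ell>rk$ vanish since their index is negative; the terms with $\sum_\ell i_\ell<rk$ contribute only in degrees below $rk$; and, after the substitution $j_s=i_s-r$, the coefficient of $n^{rk}$ is precisely the left-hand side of the displayed identity evaluated at $x_i:=A_{i,i-r}$. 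Since the polynomial is constant in $n$ and $rk\ge 1$, this coefficient vanishes for every $k\ge 1$. Applying the combinatorial hypothesis to this $r$ and these $x_i$ now forces $A_{i,i-r}=0$ for all $i$, contradicting $\ind(A)=r\ge 1$; hence $\ind(A)\le 0$ after all, $A$ is upper triangular, and $e_1$ is the sought common eigenvector. (Equivalently one runs the last step as a descending induction on $\ind(A)$, each stage strictly lowering the index, which is the form of the argument indicated earlier in the section.)

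\textbf{From Theorem~\ref{thm:main} to the combinatorial statement.} Here I would prove the contrapositive. Suppose there are $m$, $r\ge 1$ and scalars $x_{r+1},\dots,x_{m+1}$, not all zero, satisfying the displayed identity for every $k$, with the convention $x_i=0$ for $i\le r$ or $i>m+1$. Since each summand is a degree-$k$ monomial in the $x$'s whose index pattern only translates, the identities are invariant under the shift $x_i\mapsto x_{i+c}$; translating so that the lowest nonzero $x_i$ lands in position $r+1$, I may assume $x_{r+1}\ne 0$. Take $B=I+N\in\GL_{m+1}(\C)$ and look for a quasi-unipotent $A\in\GL_{m+1}(\C)$ with $A_{i,i-r}=x_i$ such that $\tr((AB^n)^k)$ is independent of $n$ for all $k$; given such an $A$, the entry $A_{r+1,1}=x_{r+1}\ne 0$ forces $Ae_1\notin\C e_1$, whereas $\C e_1$ is the unique eigenline of $B$, so $A$ and $B$ have no common eigenvector --- contradicting Theorem~\ref{thm:main}, so all the $x_i$ must have been zero. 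The obvious candidate is $A=I+A_0$, where $A_0$ has $x_i$ in position $(i,i-r)$ for $r+1\le i\le m+1$ and zeros elsewhere: this already produces the correct coefficient of $n^{rk}$ in $\tr((AB^n)^k)$ (namely the vanishing displayed sum), but the constancy of $\tr((AB^n)^k)$ also requires the coefficients of the lower powers of $n$ to vanish, and these are not governed by the displayed identities; one must therefore correct $A$ by adding entries of index strictly less than $r$, chosen to kill those lower-order terms while leaving the $r$-th subdiagonal fixed.

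The step I expect to be the main obstacle is precisely this correction in the reverse direction: verifying that the finitely many lower-order conditions --- one for each $k$ and each degree in $n$ below $rk$ --- can be satisfied simultaneously by a single choice of the entries of $A$ of index less than $r$. The first direction, by contrast, is, once the earlier lemmas of this section are in hand, a purely mechanical assembly; and if one reads the present lemma simply as the reduction of Theorem~\ref{thm:main} to the combinatorial statement --- which is all the rest of the paper uses --- then only the first direction is needed and there is no obstacle at all.
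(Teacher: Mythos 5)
Your first implication is, in substance, the paper's own proof: in the paper this lemma is justified by nothing more than the expansion of $\tr((AB^n)^k)$ in powers of $N$, the vanishing of the terms with $\sum_\ell i_\ell>rk$ via Lemma~\ref{lem:index-basic} and Lemma~\ref{lem:submult}, the identification (via Corollary~\ref{cor:prod-index}) of the coefficient of $n^{rk}$ with the displayed sum in the variables $x_i=A_{i,i-r}$, $r=\ind(A)$, and the substitution $j_s=i_s-r$; the conclusion that $A$ must then be upper triangular (so that $e_1$ is a common eigenvector with $B=I+N$) is the ``induction on the index'' the paper alludes to, which you correctly recognize is really a one-step contradiction with $\ind(A)\geq 1$. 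Your treatment of the preliminary reduction --- replacing $B$ by a unipotent power and checking that this preserves the single-block hypothesis, the $n$-independence of the traces, and the eigenline of $B$ --- is the same reduction the paper performs silently, spelled out a bit more carefully.

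For the second implication you are right to be suspicious: the naive choice $A=I+A_0$ does not work, since for $k\geq 2$ the coefficients of the powers of $n$ strictly between $0$ and $rk$ in $\tr((AB^n)^k)$ are expressions in the $x_i$ that the displayed identities do not control, and you do not carry out the proposed ``correction.'' So as a proof of the literal equivalence your proposal is incomplete. However, the paper never proves that converse either: the word ``equivalent'' in the statement is loose, and the lemma is only ever used (together with Lemma~\ref{lem:comb2} and Theorem~\ref{thm:pk}) in the direction you did establish, namely as the reduction of Theorem~\ref{thm:main} to the combinatorial statement. Measured against the paper's own proof, you have proved exactly what the paper proves, by essentially the same route; the unproved converse is absent from the paper as well.
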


 Lemma~\ref{lem:comb} can be refined further.

 \begin{lem}\label{lem:comb2}
 Let $\{x_1,\ldots,x_{m+1}\}$ be complex numbers such that
  \[\sum_{j_1+\cdots+j_k=0,\, j_t\geq -r}\left(\sum_{s=1}^{m+1}\frac{1}{(j_1+r)!\cdots(j_k+r)!}x_sx_{s+j_1}\cdots x_{s+j_1+\cdots+j_{k-1}}\right)=0\]
 for all positive integers $k$.
 Then the numbers $\{x_1,\ldots,x_{m+1}\}$ satisfy the equation
 \[\sum_{i_1,\ldots,i_k=1}^{m+1}\frac{x_{i_1}\cdots x_{i_k}}{(i_2-i_1+r)!(i_3-i_2+r)!\cdots(i_k-i_{k-1}+r)!(i_1-i_k+r)!}=0,\] where the indices of
 $\{i_1,\ldots,i_k\}$ lie in $\{1,\ldots,m+1\}$, and where if $t<0$ we adopt the convention $1/t!=0$.
 \end{lem}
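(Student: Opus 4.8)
The plan is to rewrite the hypothesis by reindexing the partial-sum variables in Lemma~\ref{lem:comb}, passing from the ``increment'' variables $j_1,\ldots,j_k$ to the ``position'' variables that actually appear as subscripts of the $x$'s. First I would set $i_1 = s$ and, for $t = 2,\ldots,k$, set $i_t = s + j_1 + \cdots + j_{t-1}$, so that $i_t - i_{t-1} = j_{t-1}$ for $2\le t\le k$. The constraint $j_1+\cdots+j_k = 0$ becomes $j_k = -(j_1+\cdots+j_{k-1}) = i_1 - i_k$, i.e. the ``wrap-around'' increment $i_1-i_k$ plays the role of the last variable $j_k$; this is exactly why the factor $(i_1-i_k+r)!$ shows up in the denominator of the target identity. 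Under this substitution the product $x_s x_{s+j_1}\cdots x_{s+j_1+\cdots+j_{k-1}}$ becomes $x_{i_1}x_{i_2}\cdots x_{i_k}$, and the denominator $(j_1+r)!\cdots(j_k+r)!$ becomes $(i_2-i_1+r)!(i_3-i_2+r)!\cdots(i_k-i_{k-1}+r)!(i_1-i_k+r)!$.

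The remaining point is to check that the \emph{ranges} of summation match up correctly, and this is where I would be most careful. In Lemma~\ref{lem:comb} the outer sum is over $s\in\{1,\ldots,m+1\}$ — which becomes $i_1\in\{1,\ldots,m+1\}$ — together with the convention that $x_i = 0$ unless $1\le i\le m+1$; and the inner sum is over $j_t\ge -r$ with $\sum j_t = 0$. The condition $j_t\ge -r$ is precisely the condition that each argument $j_t + r$ of the factorial is $\ge 0$, so that $1/(j_t+r)!$ is a genuine (nonzero) coefficient; in the target identity this is encoded by the convention $1/t! = 0$ for $t<0$, so a term with some $i_t - i_{t-1} + r < 0$ (or $i_1 - i_k + r < 0$) simply drops out, matching the $j_t\ge -r$ restriction. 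Likewise, in the target identity the indices $i_1,\ldots,i_k$ all range over $\{1,\ldots,m+1\}$ directly, whereas in Lemma~\ref{lem:comb} only $i_1 = s$ is explicitly so restricted and $i_2,\ldots,i_k$ are controlled by the $x_i = 0$ convention; but since a term contributes nothing as soon as one of $i_2,\ldots,i_k$ falls outside $\{1,\ldots,m+1\}$, extending those sums to the full range $\{1,\ldots,m+1\}$ changes nothing. After these bookkeeping checks the two expressions are literally equal term by term, and the equation in Lemma~\ref{lem:comb2} follows.

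The one genuine subtlety — the ``hard part,'' though it is still routine — is the bijectivity of the change of variables: given $(i_1,\ldots,i_k)$, we recover $s = i_1$ and $j_t = i_{t+1} - i_t$ for $t<k$ and $j_k = i_1 - i_k$, and one must confirm that every admissible $(i_1,\ldots,i_k)$ arises from exactly one admissible $(s, j_1,\ldots,j_k)$ and conversely, with no double counting and no lost terms. Since $\sum_{t=1}^k j_t = (i_2-i_1) + (i_3-i_2) + \cdots + (i_k-i_{k-1}) + (i_1-i_k) = 0$ automatically, the constraint $\sum j_t = 0$ in Lemma~\ref{lem:comb} is not an extra restriction on the $i$'s but is satisfied identically — which is the structural reason the reformulation is clean. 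I would write this out as a short explicit verification rather than invoking anything heavier.
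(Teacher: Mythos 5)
Your proposal is correct and follows essentially the same route as the paper: the substitution $i_1=s$, $i_t=s+j_1+\cdots+j_{t-1}$ (so $j_t=i_{t+1}-i_t$ and $j_k=i_1-i_k$) is exactly the paper's argument, with your bookkeeping about the conventions $x_i=0$ outside $\{1,\ldots,m+1\}$ and $1/t!=0$ for $t<0$ simply making explicit what the paper leaves to the reader.
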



 \begin{proof}
 Let $s$ by as in Lemma~\ref{lem:comb}. We set \[i_1=s,i_2=s+j_1,\ldots,i_k=s+j_1+\cdots+j_{k-1}.\] We thus get $j_n=i_{n+1}-i_n$ for
 $n\in\{1,\ldots,k-1\}$ and \[j_k=i_1-i_k=-(j_1+j_2+\cdots+j_{k-1}).\] The claim of this lemma follows immediately from these substitutions.
 \end{proof}

\subsection{Totally nonnegative matrices}
For notational convenience, we write  \[p_k=\sum_{i_1,\ldots,i_k=1}^{m+1}\frac{x_{i_1}\cdots x_{i_k}}{(i_2-i_1+r)!(i_3-i_2+r)!\cdots(i_k-i_{k-1}+r)!(i_1-i_k+r)!},\] where again
by convention we set \[\frac{1}{n!}=0\] whenever $n<0$.
In this section, we prove the following result which implies, by Lemmas \ref{lem:comb} and \ref{lem:comb2}, that Theorem \ref{thm:main} holds.

\begin{thm}\label{thm:pk}
Suppose $p_k(x_1,\ldots,x_{m+1})=0$ for all $k$. Then $x_i=0$ for $1\leq i\leq m+1$.
\end{thm}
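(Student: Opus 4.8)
The plan is to read $p_k$ as the trace of the $k$-th power of a single matrix, convert the vanishing of all these traces into the vanishing of all (weighted) principal minors of that matrix, and then use a total positivity property to force the $x_i$ to vanish.

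Set $D=\diag(x_1,\dots,x_{m+1})$ and let $M=(M_{ij})\in M_{m+1}(\C)$ be the matrix $M_{ij}=1/(j-i+r)!$, where $r\ge 1$ is the index occurring in the definition of $p_k$ and we keep the convention $1/n!=0$ for $n<0$. Writing $C=DM$, so that $C_{ij}=x_i/(j-i+r)!$, a direct inspection of the definition of $p_k$ gives
\[p_k=\sum_{i_1,\dots,i_k=1}^{m+1}C_{i_1i_2}C_{i_2i_3}\cdots C_{i_{k-1}i_k}C_{i_ki_1}=\tr\!\big(C^k\big).\]
Thus the hypothesis is exactly that $\tr(C^k)=0$ for all $k\ge 1$. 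In characteristic zero this forces the characteristic polynomial of $C$ to be $t^{m+1}$, by Newton's identities (the power sums determine the elementary symmetric functions of the eigenvalues). Equivalently, the sum of the $k\times k$ principal minors of $C$ vanishes for every $k$; and since $\det(C_{I,I})=\big(\prod_{i\in I}x_i\big)\det(M_{I,I})$ upon pulling $x_i$ out of the $i$-th row, we obtain, for all $k\ge 1$,
\[\sum_{\substack{I\subseteq\{1,\dots,m+1\}\\ |I|=k}}\Big(\prod_{i\in I}x_i\Big)\,\det\!\big(M_{I,I}\big)=0.\]
Call this identity $(\star)$.

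The crucial point — and the step I expect to be the real obstacle — is the claim that \emph{every} principal minor $\det(M_{I,I})$ is strictly positive. This is a total positivity statement: the sequence $(1/n!)_{n\ge 0}$ is a P\'olya frequency sequence (its generating function $e^z=\lim_{N}(1+z/N)^N$ being the prototypical generating function of such a sequence), so the semi-infinite Toeplitz matrix $T=\big(1/(j-i)!\big)_{i,j\ge 1}$ is totally nonnegative. For $I=\{i_1<\dots<i_k\}$, the submatrix $M_{I,I}$ is precisely the minor of $T$ on rows $i_1<\dots<i_k$ and columns $i_1+r<\dots<i_k+r$; since $1/n!>0$ for every $n\ge 0$ and $i_\ell<i_\ell+r$ for all $\ell$ (this is where $r\ge 1$ is used), the standard positivity refinement of total nonnegativity for P\'olya frequency sequences of full support gives $\det(M_{I,I})>0$. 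One may cite Karlin's \emph{Total Positivity} for this, or prove it directly by writing $T$ as a limit of products of bidiagonal totally nonnegative matrices and invoking the Lindstr\"om--Gessel--Viennot lemma; in either case I would record it as a separate lemma, and it is the technical heart of the argument. (Only this "interior" positivity is needed; the complementary vanishing of Toeplitz minors outside this regime does not enter.)

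Granting the positivity of these minors, the conclusion is immediate. Let $Z=\{\,i:x_i\ne 0\,\}$ and suppose toward a contradiction that $Z\ne\emptyset$; put $s=|Z|$ and apply $(\star)$ with $k=s$. A subset $I$ with $|I|=s$ contributes a nonzero term only when $\prod_{i\in I}x_i\ne 0$, i.e. only when $I\subseteq Z$, and then $|I|=s=|Z|$ forces $I=Z$. Hence $(\star)$ collapses to $\det(M_{Z,Z})\prod_{i\in Z}x_i=0$, which is impossible since both factors are nonzero. Therefore $Z=\emptyset$, that is, $x_1=\dots=x_{m+1}=0$. Together with Lemmas~\ref{lem:comb} and~\ref{lem:comb2}, this proves Theorem~\ref{thm:main}.
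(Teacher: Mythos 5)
Your proposal is correct, and its core is the same as the paper's: you interpret $p_k$ as $\tr(C^k)$ for $C=\diag(x_1,\dots,x_{m+1})\cdot M$ with $M$ the factorial Toeplitz matrix, and everything reduces to the nonvanishing (in fact positivity) of the principal minors of $M$ --- which is exactly the paper's Theorem~\ref{thm:positive}. Where you genuinely diverge is in how the conclusion is extracted and how the key lemma is handled. The paper substitutes $x_i=y_i^2$, writes the matrix as $\diag(y)\,\mathfrak{B}\,\diag(y)$, deduces from nilpotency only that the determinant vanishes, and then runs an induction deleting the row and column of a vanishing $y_i$; you instead use Newton's identities to get the vanishing of \emph{all} sums of principal minors at once, and then the support argument with $k=|Z|$ kills everything in one stroke --- no square-root substitution, no induction. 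That is a cleaner endgame and is perfectly rigorous in characteristic zero. For the positivity of the minors $\det(M_{I,I})$, the paper gives a self-contained argument: after scaling by positive diagonal matrices the minors become minors $\det\bigl(\binom{p_i-1}{q_j-1}\bigr)$ with $p_\ell=q_\ell+r$ of the lower triangular Pascal matrix, whose total nonnegativity is proved by an explicit bidiagonal factorization, and strict positivity comes from exhibiting one positive term in the Cauchy--Binet expansion. You outsource this to the P\'olya-frequency/Karlin theory (or LGV), which is legitimate, but be careful how you phrase the ``positivity refinement'': total nonnegativity together with positive diagonal entries of the submatrix is \emph{not} sufficient in general (the all-ones matrix is a counterexample), so the strict positivity really does require the Toeplitz/PF structure --- equivalently, the lattice-path (LGV) computation for the binomial minors with $j_\ell\ge i_\ell$, which is essentially what the paper's explicit choice of intermediate index sets $R_k$ carries out by hand. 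One last small point: you assume $r\ge 1$; the paper allows $r\ge 0$, but for $r=0$ the matrix $M$ is upper unitriangular and the claim is immediate, so nothing is lost.
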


Theorem~\ref{thm:pk} admits a quick reduction to a statement about matrices with combinatorial quantities as entries, which we carry out
here before completing the proof.

First, we perform a change of variables, setting $x_i=y_i^2$. Then the statement $p_k(x_1,\ldots,x_{m+1})=0$ becomes
\[\sum_{i_1,\ldots,i_k=1}^{m+1}\frac{(y_{i_1}y_{i_2})(y_{i_2}y_{i_3})\cdots (y_{i_k}y_{i_1})}{(i_2-i_1+r)!(i_3-i_2+r)!
\cdots(i_k-i_{k-1}+r)!(i_1-i_k+r)!}=0.\]

We let \[a_{i,j}=\frac{y_iy_j}{(i-j+r)!},\] and set $\mathfrak{A}=(a_{i,j})$. Then the equation $p_k=0$ for all $k$ is merely the equation
$\tr\mathfrak{A}^k=0$ for all $k$. In particular, the matrix $\mathfrak{A}$ is nilpotent. It follows that $\mathfrak{A}^{m+1}=0$, and so that in particular we have
$\det\mathfrak{A}=0$.

We now set \[b_{i,j}=\frac{1}{(i-j+r)!}\] and set $\mathfrak{B}=(b_{i,j})$. Note that
\[\mathfrak{A}=\diag(y_1,\ldots,y_{m+1})\cdot\mathfrak{B}\cdot\diag(y_1,\ldots,y_{m+1}),\]
where $\diag(y_1,\ldots,y_{m+1})$ denotes a diagonal matrix with the
corresponding entries. It follows that \[\det\mathfrak{A}=\det\mathfrak{B}\cdot\prod_{i=1}^{m+1} x_i.\] Note that if $\det\mathfrak{B}\neq 0$ then
$x_i=0$ for some $i$. Thus, a straightforward induction on $m$ shows that the following result implies Theorem~\ref{thm:pk}:
First of all, $\det\mathfrak{A}=0$ implies that $y_i=0$ for at least one index $i$.
By plugging $y_i=0$ into $\mathfrak{A}$, we get that the $i^{th}$ row and $i^{th}$ column of $\mathfrak{A}$ consist of only zeros. Once we delete the $i^{th}$ row and $i^{th}$ column of $\mathfrak{A}$, we get an $m\times m$ matrix \[\mathfrak{D}=\diag(y_1,\ldots,\widehat{y_i},\ldots,y_{m+1})\cdot\mathfrak{C}\cdot\diag(y_1,\ldots,\widehat{y_i},\ldots,y_{m+1}),\] 
where $\mathfrak{C}$ is the $i^{th}$ principal minor of  $\mathfrak{B}$ of size $m$. Then the fact that $\mathfrak{A}$ is nilpotent implies that $\mathfrak{D}$ is nilpotent. So if the principal minors of $\mathfrak{B}$ are all nonsingular, we obtain that one of
\[\{y_1,\ldots,\widehat{y_i},\ldots,y_{m+1}\}\]
equals $0$, and the induction process can proceed.

We therefore need only establish the following result.

\begin{thm}\label{thm:positive}
The determinants of all principal minors of $\mathfrak{B}$ are nonzero.
\end{thm}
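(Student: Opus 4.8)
The matrix $\mathfrak{B}$ has entries $b_{i,j}=1/(i-j+r)!$, and a principal minor corresponding to a subset $I=\{\ell_1<\ell_2<\dots<\ell_p\}\subseteq\{1,\dots,m+1\}$ is the $p\times p$ matrix with entries $1/(\ell_a-\ell_b+r)!$. My plan is to recognize $\mathfrak{B}$ (and all its principal minors) as a \emph{totally nonnegative} matrix in the sense of the theory of Pólya frequency sequences, and in fact to show the relevant minors are strictly positive. The key observation is that $c_n := 1/n!$ (extended by $c_n=0$ for $n<0$) is the coefficient sequence of the entire function $e^x$, which is a classical Pólya frequency function: the bi-infinite Toeplitz matrix $(c_{i-j})$ is totally positive. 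The matrix $\mathfrak{B}$ is a finite contiguous (if $I$ is an interval) or non-contiguous (general $I$) submatrix of the shifted Toeplitz matrix $(c_{i-j+r})_{i,j}$, so its minors are among the minors of a totally nonnegative Toeplitz matrix, hence nonnegative; the task is then to rule out vanishing.

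\textbf{Steps.} First I would reduce to the interval case by a shift: a general principal minor on $I=\{\ell_1<\dots<\ell_p\}$ is the minor of the Toeplitz matrix $T=(1/(a-b+r)!)_{a,b\in\Z}$ on rows $\ell_1,\dots,\ell_p$ and the same columns. Second, I would invoke the exact formula for minors of a Toeplitz matrix built from $e^x$: by the Lindström–Gessel–Viennot lemma (or directly the Cauchy–Binet/Jacobi–Trudi type expansion), the minor of $(c_{i-j})$ on rows $R$ and columns $C$ equals a sum over families of non-intersecting lattice paths, each path contributing a product of $c$'s; equivalently it is a Schur-type polynomial evaluation. The cleanest route: the $p\times p$ minor of $T$ with row set $\{\ell_a+r\}$ and column set $\{\ell_b\}$ equals $\det(1/(\ell_a-\ell_b+r)!)$, and this determinant can be evaluated in closed form. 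Indeed, $\det\left(\frac{1}{(\ell_a-\ell_b+r)!}\right)_{a,b=1}^p$ is, up to a nonzero factor, a value of a Schur polynomial $s_\lambda(1,1,\dots,1)$ (a "Vandermonde-like" determinant): writing $u_a=\ell_a+r$, the matrix $\left(\frac{1}{(u_a-\ell_b)!}\right)$ is obtained from the confluent-Vandermonde / Jacobi–Trudi identity, and one gets
\[
\det\!\left(\frac{1}{(\ell_a-\ell_b+r)!}\right)_{a,b=1}^{p}
=\frac{\prod_{a<b}(\ell_b-\ell_a)}{\prod_{a=1}^{p}(\ell_a + r - \ell_{?})!}\cdot(\text{positive combinatorial factor}),
\]
and more usefully, this equals $s_{\mu}(1^{\,r+p-1})/H_\mu$ or a hook-content product, which is manifestly \emph{positive} whenever all the required factorials have nonnegative arguments. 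Third, I would check the positivity condition on arguments: since $I\subseteq\{1,\dots,m+1\}$ and $r$ is the index of $A$ with $r\ge 1$ (the inductive step only treats $r>0$; when $r\le 0$, $A$ is already upper triangular and there is nothing to prove), the diagonal entries are $1/r!>0$ and the Schur-polynomial evaluation is over a nonempty alphabet of $1$'s, hence strictly positive.

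\textbf{Main obstacle.} The essential difficulty is producing the closed-form (or at least a provably nonzero form) for $\det(1/(\ell_a-\ell_b+r)!)$ on a \emph{non-contiguous} index set $I$; for contiguous $I$ this is a standard Hankel/Toeplitz evaluation (e.g. $\det(1/(i-j+r)!)_{1\le i,j\le p}=\prod_{a=1}^{p-1} a!\,/\,\prod \dots$, a ratio of superfactorials, clearly nonzero), but for gapped $I$ one must invoke the Lindström–Gessel–Viennot / Schur-function interpretation to see the minor is a positive sum of path weights and in particular nonzero. An alternative that sidesteps closed forms: use strict total positivity of the Pólya frequency function $e^x$ — the sequence $1/n!$ is \emph{strictly} $PF_\infty$, so \emph{every} minor of the associated Toeplitz matrix with row indices and column indices each strictly increasing and "compatible" (row set dominating column set in the shifted sense, which holds here because of the $+r$ shift with $r\ge1$) is strictly positive. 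I would structure the final proof around this strict-total-positivity statement, citing the classical Pólya frequency theory (Schoenberg, Karlin), and verify the index-compatibility bookkeeping, since that is the only place where the specific hypotheses ($r\ge 1$, indices in $\{1,\dots,m+1\}$) genuinely enter. I expect the bookkeeping — matching the "row set minus $r$ lies below the column set" condition needed for strict positivity — to be the one genuinely fiddly point; everything else is a citation.
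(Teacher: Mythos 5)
Your proposal is essentially sound, and the bookkeeping you single out as the crux is exactly right and does work out: after the diagonal rescaling (multiply row $a$ by $(\ell_a+r)!$ and column $b$ by $1/\ell_b!$) a principal minor of $\mathfrak{B}$ on $I=\{\ell_1<\cdots<\ell_p\}$ becomes the binomial determinant $\det\bigl(\binom{\ell_a+r}{\ell_b}\bigr)$, whose row set dominates its column set termwise because $\ell_a+r\ge\ell_a$ for $r\ge 0$; by Gessel--Viennot (or strict PF theory for the Toeplitz kernel of $e^x$) such a determinant is strictly positive, since the dominance condition guarantees at least one non-intersecting path family. The paper proves the same underlying fact but keeps everything self-contained and elementary: it rescales $\mathfrak{B}$ to the binomial matrix $\binom{i+r}{j}$, realizes this as a minor of the lower triangular Pascal matrix $L_n$, proves $L_n$ is totally nonnegative by an explicit factorization into bidiagonal matrices $I_n+E_{i,i-1}$, and then gets strict positivity from Cauchy--Binet by writing down an explicit chain of index sets $R_n,\ldots,R_1$ (shifting the whole set down by one, $r$ times) so that every factor minor equals $1$ --- this explicit witness chain is precisely the non-intersecting path family your LGV argument would produce, so the two proofs are the same mathematics with different packaging: yours buys brevity at the cost of citing Schoenberg/Karlin/Gessel--Viennot, the paper's buys self-containment at the cost of the bidiagonal-factorization lemma. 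Two caveats on your write-up: the displayed ``closed form'' for $\det\bigl(1/(\ell_a-\ell_b+r)!\bigr)$ is garbled as written (the hook-content/Schur evaluation needs care for non-contiguous $I$), but you correctly flag it as dispensable and the LGV/strict-TP route does not need it; and the positivity only requires $r\ge 0$, so your restriction to $r\ge 1$ is harmless but not necessary (the paper's Lemma~\ref{lem:positive-m} allows $r\ge 0$).
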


Note that if $M$ is a matrix whose principal minors are all nonsingular, then if we multiply $M$ by a nonsingular diagonal matrix
(on the left or on the right), the principal
minors of the resulting matrix remain nonsingular. Thus, we may modify $\mathfrak{B}$ by multiplying by nonsingular diagonal matrices
in order to convert it into a more advantageous form, without affecting the statement of Theorem~\ref{thm:positive}. With this in mind,
we multiply on the left by \[\diag((r+1)!,(r+2)!,\ldots,(r+m+1)!),\] and on the right by \[\diag(1/1!,1/2!,\ldots,1/(m+1)!).\] The resulting matrix will
be denoted $M(r,m)=(f_{i,j})$, where we have \[f_{i,j}=\binom{i+r}{j}=\frac{(i+r)!}{j!(i-j+r)!}.\]

Thus, it suffices to prove the following result:

\begin{thm}\label{thm:positive-m}
The determinants of all principal minors of $M$ are nonzero.
\end{thm}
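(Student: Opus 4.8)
The matrix $M(r,m)$ has $(i,j)$ entry $f_{i,j}=\binom{i+r}{j}$ for $1\le i,j\le m+1$, and we must show every principal minor is nonzero. The plan is to reduce to the totally positive case via a combinatorial identity for minors of binomial matrices, then invoke positivity.

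First I would observe that an arbitrary principal minor of $M(r,m)$ is obtained by selecting a subset $I=\{i_1<\cdots<i_\ell\}$ of rows and the \emph{same} subset of columns; its entries are $\binom{i_a+r}{i_b}$. The key step is to recognize the matrix $\left(\binom{i+r}{j}\right)$ as (a submatrix of) a classical totally positive matrix. Indeed, the infinite matrix of binomial coefficients $\binom{s}{t}$ (Pascal's matrix) is totally positive: every minor, when the row and column index sets are each taken in increasing order, is a nonnegative integer, and is strictly positive precisely when the index sets ``interlace'' in the appropriate Gantmacher--Krein sense. More usefully for us, one can compute the minor of $\left(\binom{i+r}{j}\right)$ on rows $I$ and columns $J$ (both listed increasingly) by the Lindström--Gessel--Viennot lemma: it counts families of non-intersecting lattice paths, hence is a nonnegative integer, and is positive exactly when a single non-intersecting family exists. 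The heart of the argument is therefore to check that when $I=J$ the path system can always be realized, so the minor is strictly positive (in fact it should be expressible as a product of Vandermonde-type quantities, e.g. via the Cauchy--Binet / Jacobi--Trudi mechanism, giving something like $\prod_{a<b}\frac{i_b-i_a}{\text{(something)}}$ times a manifestly positive factor).

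Concretely, I would proceed as follows. Write the chosen principal submatrix as $\left(\binom{i_a+r}{i_b}\right)_{a,b=1}^\ell$. Factor $\binom{i_a+r}{i_b}=\frac{(i_a+r)!}{i_b!\,(i_a-i_b+r)!}$ and pull out the row factors $(i_a+r)!$ and column factors $1/i_b!$ — these are nonzero, so the determinant's vanishing is controlled by $\det\!\left(\frac{1}{(i_a-i_b+r)!}\right)$. This is a generalized Cauchy-type determinant; I expect it evaluates by a known closed formula to $\frac{\prod_{a<b}(i_b-i_a)^2}{\prod_{a,b}(i_a-i_b+r)!}$ or a close variant, which (using the convention $1/t!=0$ for $t<0$, and noting the product of factorials in the denominator is over a range where all arguments are genuinely nonnegative since $|i_a-i_b|\le m< r$ would be needed — here one must be slightly careful, as $r$ can be small) is strictly positive because the $i_a$ are distinct. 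The cleanest route to this closed form is to recognize $\left(\frac{1}{(i_a-i_b+r)!}\right)$ as a submatrix of the Toeplitz matrix with symbol $e^x$, whose minors are given by the dual Jacobi--Trudi (Nägelsbach--Kostka) formula as Schur polynomials in the ``variables'' encoded by the shifts, and Schur polynomials of distinct content are positive.

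The main obstacle I anticipate is the bookkeeping around the convention $1/t!=0$ for $t<0$: the submatrix $\left(\frac{1}{(i_a-i_b+r)!}\right)$ is genuinely triangularly truncated when $r$ is small relative to the spread $i_\ell-i_1$, so the naive Cauchy/Schur evaluation does not apply verbatim and one must argue that the truncation does not introduce cancellation. The right way to handle this is almost certainly the Lindström--Gessel--Viennot lattice-path interpretation, where the truncation simply means certain edges are absent; one then shows the \emph{unique} minimal non-intersecting path family survives, so the minor is a positive integer. Verifying that this path family always exists for $I=J$ — as opposed to for a general rectangular choice, where it can fail — is the crux, and I would expect it to follow from a short interlacing check: the diagonal choice $i_a\mapsto i_a$ is precisely the ``non-crossing'' configuration. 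Once that is in place, Theorem~\ref{thm:positive-m} follows, and hence so do Theorems~\ref{thm:positive}, \ref{thm:pk}, and \ref{thm:main}.
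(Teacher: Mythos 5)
Your route is essentially correct but genuinely different from the paper's. You reduce a principal minor of $M(r,m)$ to the minor of the full Pascal matrix $\bigl(\binom{s}{t}\bigr)$ on rows $\{i_a+r\}$ and columns $\{i_a\}$ and invoke Lindstr\"om--Gessel--Viennot: the minor counts non-intersecting lattice path families, and positivity amounts to exhibiting one such family. That crux does go through: since the row index $i_a+r$ dominates the column index $i_a$ for every $a$, one can take the explicit ``L-shaped'' paths (for the $a$-th pair, go east then north), and a one-line check shows these are pairwise disjoint, so the minor is a positive integer; no Gantmacher--Krein interlacing machinery is needed beyond the inequality $i_a+r\geq i_a$. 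By contrast, the paper (Lemmas~\ref{lem:pascal} and~\ref{lem:positive-m}) factors the lower triangular Pascal matrix $L_n$ into elementary bidiagonal factors $I_n+E_{i,i-1}$, concludes total nonnegativity, and then gets strict positivity from the Cauchy--Binet expansion (Lemma~\ref{lem:cauchy}) by writing down an explicit chain of intermediate index sets $R_k$ for which every factor's minor equals $1$; this is morally the planar-network version of your path argument, with the bidiagonal factorization playing the role of the acyclic network and the sets $R_k$ encoding your path family, but it avoids quoting LGV and stays entirely inside elementary determinant identities. Two cautions about your write-up: the ``generalized Cauchy-type'' closed form you guess for $\det\bigl(1/(i_a-i_b+r)!\bigr)$ is not correct in general (and, as you yourself note, the truncation $1/t!=0$ for $t<0$ breaks any naive Schur/Toeplitz evaluation), so that side route should be dropped rather than patched; and the statement you ultimately need is the paper's Lemma~\ref{lem:positive-m} with rows $p_a=q_a+r$ versus columns $q_a$, so you should phrase the positivity criterion as $p_a\geq q_a$ for all $a$ rather than as a statement about $I=J$ being ``non-crossing,'' since the submatrix is principal only before the shift by $r$ is moved into the row indices.
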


We will prove Theorem~\ref{thm:positive-m} by establishing the following fact.

\begin{lem}\label{lem:positive-m}
Let \[0<q_1<\cdots<q_m\] and $r\geq 0$ be integers, and let $p_k=q_k+r$. Let $M$ be a matrix whose entries are given by
\[f_{i,j}=\binom{p_i-1}{q_j-1}.\] Then the determinant of $M$ is positive.
\end{lem}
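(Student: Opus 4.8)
The plan is to recognize the matrix $M$ with entries $f_{i,j}=\binom{p_i-1}{q_j-1}$ as a generalized binomial (or ``Pascal-type'') matrix, and to prove positivity of its determinant by exhibiting it as a sum of products of simpler determinants all of which are manifestly positive. The cleanest route I would take is via the Lindström--Gessel--Viennot (LGV) lemma. Binomial coefficients $\binom{p_i-1}{q_j-1}$ count lattice paths: $\binom{p_i-1}{q_j-1}$ is the number of monotone lattice paths in $\Z^2$ from a source $a_i$ to a sink $b_j$ when the points are placed appropriately (e.g.\ $a_i=(0,-p_i)$ and $b_j=(q_j-1,\,-(p_i-1)+\cdots$ — one sets things up so the number of right-steps is $q_j-1$ and the total number of steps is $p_i-1$). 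The key combinatorial input is that the inequalities $0<q_1<\cdots<q_m$ and $p_k=q_k+r$ with $r\ge 0$ force the sources $a_1,\ldots,a_m$ and sinks $b_1,\ldots,b_m$ into a configuration that is \emph{non-permutable}: any family of vertex-disjoint paths must connect $a_i$ to $b_i$ for each $i$, so the only permutation contributing to the LGV expansion is the identity, with sign $+1$. Hence $\det M$ equals the number of families of $m$ pairwise non-intersecting lattice paths from the $a_i$ to the $b_i$, which is a non-negative integer; one then checks it is strictly positive by writing down one explicit such family (the ``nested'' family of paths, which exists precisely because of the strict monotonicity of the $q_k$ and the uniform shift by $r$).

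The order of steps I would carry out: (1) fix an explicit placement of sources and sinks in $\Z^2$ so that the number of monotone (unit right / unit up) lattice paths from $a_i$ to $b_j$ is exactly $\binom{p_i-1}{q_j-1}$, being careful about the degenerate cases where $p_i-1<q_j-1$ (the binomial is zero, and indeed there are no such paths); (2) verify the non-permutability condition — that the coordinates are arranged so that sources and sinks interlace in the way LGV requires for only the identity permutation to survive — using $q_1<\cdots<q_m$ and $p_i-p_j=q_i-q_j$; (3) invoke the LGV lemma to conclude $\det M = \#\{\text{vertex-disjoint path families }a_i\to b_i\}$; (4) exhibit one explicit vertex-disjoint family to get $\det M>0$. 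As an alternative to LGV, one could instead prove the lemma by pure induction/row operations: subtract consecutive rows and use the Pascal identity $\binom{p_i-1}{q_j-1}-\binom{p_i-1-1}{q_j-1}=\binom{p_i-2}{q_j-2}$ repeatedly to reduce to a smaller matrix of the same shape (with $q_j$ and $r$ shifted), tracking that the shape hypotheses are preserved; this is more elementary but bookkeeping-heavy.

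The main obstacle I anticipate is step (2): setting up the source/sink geometry so that the LGV sign issue genuinely collapses to the identity permutation. This is exactly where the hypothesis $r\ge 0$ (equivalently $p_k-q_k$ constant and non-negative) is used, and getting the placement right — especially handling indices where $q_j>p_i$ so that the $(i,j)$ entry vanishes and one must confirm no disjoint family uses a ``crossing'' assignment — requires care. Everything after that (the LGV lemma itself, and producing one explicit non-crossing family) is routine. I would therefore spend most of the write-up pinning down the lattice-path model and the interlacing of the endpoint coordinates, and keep the concluding positivity argument brief.
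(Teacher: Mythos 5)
Your approach is correct, but it is a genuinely different route from the paper's. The paper never invokes the Lindstr\"om--Gessel--Viennot lemma: it factors the lower triangular Pascal matrix $L_n$ into elementary bidiagonal matrices $I_n+E_{i,i-1}$, each visibly totally nonnegative, so that by Cauchy--Binet the minor $(L_n)_{P,Q}=M$ has nonnegative determinant; strict positivity is then obtained by expanding $\det((L_n)_{P,Q})$ via Cauchy--Binet over chains of intermediate index sets $R_{n-1},\ldots,R_2$ and exhibiting one explicit chain (hold the set $\{q_i+r\}$ fixed, then shift all indices down by one, $r$ times) for which every factor minor equals $1$. Your LGV argument instead identifies $\det M$ with the number of vertex-disjoint families of lattice paths joining sources indexed by the $p_i$ to sinks indexed by the $q_j$, and gets positivity from one explicit non-crossing family. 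The two arguments are close cousins -- the paper's bidiagonal factorization is the planar network underlying the Pascal matrix, its chain of sets $R_k$ is exactly an explicit non-intersecting path family, and your LGV sign-cancellation is replaced in the paper by the blanket total nonnegativity of the factors -- but yours buys a cleaner combinatorial meaning for $\det M$, while the paper's stays inside elementary linear algebra and avoids having to verify the non-permutability condition. Two small points to fix in a full write-up: the coordinates you sketch for the sinks are garbled (they depend on $i$, which is not allowed; the standard placement with sources $(0,p_i-1)$ on a vertical line, sinks $(q_j-1,q_j-1)$ on the diagonal, and east/south steps gives exactly $\binom{p_i-1}{q_j-1}$ paths and handles the vanishing entries automatically), and the hypothesis $r\geq 0$ is not what makes the LGV sum collapse to the identity permutation -- that follows from the monotonicity of the $p_i$ and $q_j$ alone -- rather it is what guarantees $p_i-1\geq q_i-1$ and hence the existence of at least one disjoint family, which is where strict positivity comes from.
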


We retain a standing convention that if $p<q$ then \[\binom{p}{q}=0.\] It is clear that Lemma~\ref{lem:positive-m} implies
Theorem~\ref{thm:positive-m}.

A matrix is called \emph{totally nonnegative} if all of its minors have nonnegative determinant. For a square $n\times n$ matrix $A$, we set
$I=\{i_1,\ldots,i_m\}$ and $J=\{j_1,\ldots,j_m\}$ to be subsets of $[n]=\{1,\ldots,n\}$ which have the same size. We write $A_{I,J}$ for the
minor of $A$ whose row indices lie in $I$ and whose column indices lie in $J$.

The determinant of a minor of a product of two matrices can be expanded from minors of the two factor matrices. Specifically, we have
the following fact, classically known as the Binet--Cauchy formula \cite{prasolov}.

\begin{lem}\label{lem:cauchy}
Let $A$ and $B$ be $n\times n$ matrices, and let $I,J\subset [n]$ have cardinality $m$. Then \[\det((AB)_{I,J})=
\sum_{K\subset [n],\, |K|=m} \det(A_{I,K})\det(B_{K,J}).\]
\end{lem}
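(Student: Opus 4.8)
The plan is to reduce the statement to the rectangular form of the Binet--Cauchy identity and then expand a determinant by multilinearity in its columns. First I would fix $I=\{i_1<\cdots<i_m\}$ and $J=\{j_1<\cdots<j_m\}$, and let $P$ denote the $m\times n$ matrix obtained from $A$ by retaining the rows indexed by $I$, and $Q$ the $n\times m$ matrix obtained from $B$ by retaining the columns indexed by $J$. Then $(AB)_{I,J}=PQ$ as $m\times m$ matrices; moreover, for an $m$-element subset $K\subset[n]$ one has $A_{I,K}=P_{\bullet,K}$, the submatrix of $P$ formed by the columns in $K$, and $B_{K,J}=Q_{K,\bullet}$, the submatrix of $Q$ formed by the rows in $K$. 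Hence it is enough to establish that $\det(PQ)=\sum_{|K|=m}\det(P_{\bullet,K})\det(Q_{K,\bullet})$ for every $m\times n$ matrix $P$ and $n\times m$ matrix $Q$.

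To prove this, I would write $v_1,\dots,v_n\in\C^{\,m}$ for the columns of $P$. The $b$-th column of $PQ$ equals $\sum_{k=1}^n Q_{k,b}\,v_k$, so expanding the determinant by multilinearity in the columns gives
\[\det(PQ)=\sum_{k_1,\dots,k_m=1}^{n}Q_{k_1,1}\cdots Q_{k_m,m}\,\det\!\bigl(v_{k_1}\mid\cdots\mid v_{k_m}\bigr).\]
Any term in which two of the indices $k_a$ agree contributes a determinant with a repeated column, hence $0$; only strictly distinct sequences $(k_1,\dots,k_m)$ survive. I would then group these surviving terms by the underlying set $K=\{k_1,\dots,k_m\}$: writing $K=\{\ell_1<\cdots<\ell_m\}$, each such sequence equals $k_a=\ell_{\tau(a)}$ for a unique permutation $\tau$ of $\{1,\dots,m\}$, and $\det(v_{k_1}\mid\cdots\mid v_{k_m})=\sgn(\tau)\det(P_{\bullet,K})$ while $Q_{k_1,1}\cdots Q_{k_m,m}=\prod_{a=1}^{m}Q_{\ell_{\tau(a)},a}$. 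Summing over $\tau$ reconstitutes $\det(Q_{K,\bullet})$, whose $(p,q)$-entry is $Q_{\ell_p,q}$, and summing over all $m$-subsets $K$ yields the asserted identity.

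The only step requiring genuine care is this last sign bookkeeping---matching $\sgn(\tau)$ against the permutation expansion of $\det(Q_{K,\bullet})$ after sorting the elements of $K$ in increasing order---and it is routine, so I do not foresee a real obstacle. An equally short alternative would be to deduce the identity from the square case $m=n$ (which itself follows either from the same column expansion, or from computing $\det\!\begin{pmatrix}A&0\\-I&B\end{pmatrix}$ in two ways) applied to $P$ and $Q$ after padding them to square matrices; but the direct column expansion above is the cleanest route. Since the identity is entirely classical, it is of course also legitimate simply to invoke \cite{prasolov}.
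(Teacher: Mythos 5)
Your argument is correct: the reduction of $\det((AB)_{I,J})$ to the rectangular identity $\det(PQ)=\sum_{|K|=m}\det(P_{\bullet,K})\det(Q_{K,\bullet})$ with $P=A_{I,\bullet}$ and $Q=B_{\bullet,J}$ is exactly right, and the column-multilinearity expansion, the vanishing of terms with repeated indices, and the regrouping over subsets $K$ with the sign $\sgn(\tau)$ matching the permutation expansion of $\det(Q_{K,\bullet})$ all go through without trouble. The only point of comparison to make is that the paper does not prove this lemma at all: it states it as the classical Binet--Cauchy formula and simply cites \cite{prasolov}, then uses it (via the consequence that products of totally nonnegative matrices are totally nonnegative) in the proofs of Lemma~\ref{lem:pascal} and Lemma~\ref{lem:positive-m}. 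So where the paper buys brevity by invoking a standard reference, you supply a self-contained elementary proof; your closing remark that citing \cite{prasolov} would also suffice is in fact precisely what the authors do. Nothing is missing from your argument, and either route is acceptable here.
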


An immediate consequence of the Cauchy--Binet formula is that the product of two totally nonnegative matrices is again totally nonnegative.
We define the \emph{lower triangular Pascal matrix} $L_n$ (see \cite{ADGP}) to be the $n\times n$ matrix whose entries are given
by \[(L_n)_{i,j}=\binom{i-1}{j-1}.\] With our convention, it becomes clear that if
$i<j$ then the corresponding entry of $L_n$ is zero, so that $L_n$ is indeed lower triangular. Observe that if $n>p_m$ then the matrix $M$
as defined in Lemma~\ref{lem:positive-m} is a minor of $L_n$.

\begin{lem}\label{lem:pascal}
For $n\geq 1$, the matrix $L_n$ is totally nonnegative.
\end{lem}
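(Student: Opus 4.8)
The plan is to induct on $n$, the base case $L_1=(1)$ being immediate. For the inductive step I would exhibit $L_n$ as a product of two totally nonnegative matrices and invoke the Cauchy--Binet formula (Lemma~\ref{lem:cauchy}), whose consequence --- already noted in the excerpt --- is that a product of totally nonnegative matrices is totally nonnegative. The factorization I have in mind is
\[
L_n=T_n\cdot\diag(1,L_{n-1}),
\]
where $T_n$ is the $n\times n$ lower--triangular all--ones matrix (entry $1$ when $i\geq j$, else $0$) and $\diag(1,L_{n-1})$ is block--diagonal with a $1$ in the upper--left corner and $L_{n-1}$ in the remaining $(n-1)\times(n-1)$ block.

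The factorization comes from Pascal's rule $\binom{i-1}{j-1}=\binom{i-2}{j-1}+\binom{i-2}{j-2}$: left--multiplying $L_n$ by $I-N$, with $N$ the nilpotent subdiagonal shift $N_{i,i-1}=1$, performs the simultaneous row operations $(\text{row } i)\mapsto(\text{row }i)-(\text{row }i-1)$, and Pascal's rule turns $L_n$ into $\diag(1,L_{n-1})$ (the first row staying $(1,0,\dots,0)$ by the convention $\binom{-1}{j-1}=0$). Since $N^n=0$, we have $(I-N)^{-1}=I+N+\dots+N^{n-1}=T_n$, which gives the displayed identity. It then remains to verify that both factors are totally nonnegative. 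For $\diag(1,L_{n-1})$: a submatrix avoiding the first row and column is a submatrix of $L_{n-1}$; one retaining exactly one of the first row or first column has a zero row or a zero column; one retaining both has first row $(1,0,\dots,0)$, so its determinant equals a minor of $L_{n-1}$ --- all nonnegative by the inductive hypothesis. For $T_n$: in any $m\times m$ submatrix each row is a (weakly lengthening, going down) prefix of $1$'s followed by $0$'s, so either two rows agree or the submatrix has a zero first row --- both giving determinant $0$ --- or the prefix lengths are exactly $1,2,\dots,m$, in which case the submatrix is the lower--triangular all--ones matrix of determinant $1$. Cauchy--Binet then finishes the induction.

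All of this is routine; the only thing one really has to notice is the Pascal--rule factorization $L_n=T_n\cdot\diag(1,L_{n-1})$ together with the staircase computation for $T_n$, so I do not expect a genuine obstacle. If one prefers a non--inductive argument, there is an essentially equivalent proof via the Lindstr\"{o}m--Gessel--Viennot lemma: realize $\binom{i-1}{j-1}$ as the number of directed paths from a source $s_i$ to a sink $t_j$ in the planar acyclic ``Pascal's triangle'' network, so that planarity makes every family of vertex--disjoint paths noncrossing and the signed path count computing each minor of $L_n$ reduces to an honest nonnegative count.
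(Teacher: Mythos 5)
Your proof is correct, and it is a cousin of the paper's argument rather than a literal copy: both proofs reduce total nonnegativity of $L_n$ to Cauchy--Binet (Lemma~\ref{lem:cauchy}) applied to a factorization into visibly totally nonnegative factors, but the factorizations differ. The paper writes $L_n$ explicitly as a product of lower bidiagonal elementary matrices $I_n+E_{i,i-1}$ (no induction), and checks by a two-line case analysis that every minor of $I_n+E_{i,i-1}$ is $0$ or $1$; you instead induct on $n$ via $(I-N)L_n=\diag(1,L_{n-1})$, i.e.\ $L_n=T_n\cdot\diag(1,L_{n-1})$, and verify the all-ones lower triangular matrix $T_n$ is totally nonnegative by your staircase argument (which is fine: in any submatrix the rows are weakly lengthening prefixes of ones, so the determinant is $0$ unless the prefix lengths are exactly $1,\dots,m$, when it is $1$). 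The two are essentially equivalent --- indeed $T_n$ is itself the product $(I_n+E_{n,n-1})\cdots(I_n+E_{2,1})$ of the paper's elementary factors, so your recursion repackages their factorization --- but the paper's finer decomposition buys something downstream: the same explicit product is reused in the proof of Lemma~\ref{lem:positive-m}, where a specific chain of index sets $R_k$ is threaded through the Cauchy--Binet expansion over the bidiagonal factors to show the relevant minor is strictly positive; your recursive factorization would not feed that later argument directly. Your closing remark that the Lindstr\"om--Gessel--Viennot lemma gives a non-inductive combinatorial proof is also correct, and is a genuinely different (path-counting) route not taken in the paper.
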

\begin{proof}
Let $E_{i,j}$ be a matrix whose unique nonzero entry is in the $(i,j)$ entry. A direct inductive computation
(cf. Lemma 1 of \cite{ADGP}) yields
\[L_n=(I_n+E_{n,n-1})(I_n+E_{n,n-1}+E_{n-1,n-2}))\cdots(I_n+E_{n,n-1}+E_{n-1,n-2}+\cdots+E_{2,1}).\]
We can further expand the factors in this product as
\[I_n+E_{n,n-1}+E_{n-1,n-2}+\cdots+E_{i,i-1}=(I_n+E_{i,i-1})(I_n+E_{i+1,i})\cdots (I_n+E_{n,n-1}).\]
Thus to establish the lemma, it suffices to show that each matrix of the form \[I_n+E_{i,i-1}\] is totally nonnegative. To do this, we compute
the determinant of \[(I_n+E_{i,i-1})_{I,J}.\] If $i\notin I$ then the only possibility for $J$ for which $(I_n+E_{i,i-1})_{I,J}$ has nonzero determinant
is for $I=J$, wherein the determinant is $1$. If $i\in I$, then $(I_n+E_{i,i-1})_{I,J}$ has nonzero determinant only if $I=J$ or if
\[J=(I\setminus\{i\})
\cup\{i-1\}\] and $i-1\notin J$. It is straightforward to see then that the determinant is $1$ in both these cases. The conclusion of the lemma
follows.
\end{proof}

It follows then that $M$ is totally nonnegative, so that $\det M\geq 0$.

\begin{proof}[Proof of Lemma~\ref{lem:positive-m}]
It suffices to show that $\det M>0$. Let $P=\{p_1,\ldots,p_m\}$ and let $Q=\{q_1,\ldots, q_m\}$ with $p_k=q_k+r$. It suffices to show that $\det((L_n)_{P,Q})>0$.
This determinant can be computed from the Cauchy--Binet formula. Indeed, we may expand $\det((L_n)_{P,Q})$ as
\[\sum_{R_2,\ldots,R_{n-1}\subset [n],\, |R_i|=m}\det((I_n+E_{n,n-1})_{P,R_{n-1}})\cdots
\det((I_n+E_{n,n-1}+\cdots+E_{2,1})_{R_2,Q}).\] Note that each term in this sum is nonnegative, so that we need only find suitable
$\{R_2,\ldots,R_{n-1}\}$ so that each of the corresponding minors is positive. We will adopt the notation $P=R_n$ and $Q=R_1$.

We observe that \[\det((I_n+E_{n,n-1}+\cdots+E_{k,k-1})_{R_k,R_{k-1}})=1\] if and only if there is a (possibly empty) subset
\[\{a_1,a_2,\ldots,a_j\}\subset R_k\cap\{k,\ldots,n\}\] satisfying \[R_{k-1}=(R_k\setminus\{a_1,\ldots,a_j\})\cup\{a_1-1,\ldots,a_j-1\},\]
such that they have the same cardinality. A proof of
this fact can be given by an argument identical to that given in the proof of Lemma~\ref{lem:pascal}.

For the matrix $M$ under consideration, we have \[R_n=\{p_1,\ldots,p_m\}=\{q_1+r,\ldots,q_m+r\},\] and $R_1=\{q_1,\ldots, q_m\}$. We set
\[R_n=R_{n-1}=\cdots=R_{q_1+r}=\{q_1+r,\cdots,q_m+r\},\] then write \[R_{q_1+r-j}=\{q_1+r-j,q_2+r-j,\ldots,q_m+r-j\}\] for $1\leq j\leq r$, and finally write \[R_{q_1-1}=\cdots=R_1=\{q_1,q_2,\ldots,q_m\}.\]
This exhibits a suitable choice of $\{R_1,\ldots,R_n\}$ and hence proves that $\det M>0$.
\end{proof}

\section*{Acknowledgements}
The first author is partially supported by an Alfred P. Sloan Foundation Research Fellowship and by NSF Grant DMS-1711488.  The second author is partially supported by NSF Grants DMS-1760527, DMS-1737876 and DMS-1811878.
The third author is partially supported by NSF Grant DMS-1840696. The authors are grateful to A. Hadari for helpful discussions and to the
anonymous referee for several comments which improved the paper.

\end{document}